\tikzset{
  symbol/.style={
    draw=none,
    every to/.append style={
      edge node={node [sloped, allow upside down, auto=false]{$#1$}}}
  }
}
\newcommand{\BC}{{\mathbb {C}}}
\newcommand{\Fh}{{\mathfrak {h}}}
\newcommand{\Z}{\mathbb{Z}}
\newcommand{\ZZ}{\mathbb{Z}}
\newcommand{\R}{\mathbb{R}}
\newcommand{\RR}{\mathbb{R}}
\newcommand{\OO}{{\mathcal O}}
\newcommand{\SL}{\mathrm{SL}}
\newcommand{\GL}{\mathrm{GL}}
\newcommand{\nr}{\mathrm{nr}}
\newcommand{\Irr}{\mathrm{Irr}}
\newcommand{\fpp}{\mathrm{FPP}}
\newcommand{\Ind}{\mathrm{Ind}}
\newcommand{\LLC}{\mathrm{LLC}}
\newcommand{\Lie}{\mathrm{Lie}}
\newcommand{\mfr}[1]{\mathfrak{#1}}
\newcommand{\half}[1]{\frac{#1}{2}}
\newcommand{\comment}[1]{}
\newcommand{\Fr}{{\mathrm{Fr}}}
\newtheorem{thm}{Theorem}[section]
\newtheorem{lemma}[thm]{Lemma}
\newtheorem{prop}[thm]{Proposition}
\newtheorem {conj}[thm]{Conjecture}
\newtheorem {ques/conj}[thm]{Question/Conjecture}
\newtheorem{remark}[thm]{Remark}
\newtheorem{exmp}[thm]{Example}
\newtheorem{claim}[thm]{Claim}
\newtheorem{rmk}[thm]{Remark}
\newenvironment{manualtheorem}[1]{%
  \IfBlankTF{#1}
    {}
    {}%
  \manualtheoreminner
}{\endmanualtheoreminner}
\newtheorem*{globalcond*}{Global Condition}
\newtheorem*{localcond*}{Local Condition}
\newtheorem*{globalconj*}{Global Conjecture}
\newtheorem*{localconj*}{Local Conjecture}
\newtheorem*{nonzero*}{Conjecture on the non-vanishing of the normalized intertwining operators}
\newtheorem*{holo*}{Conjecture on the holomorphicity of the normalized intertwining operators}
\DeclareMathOperator{\ad}{ad}
\DeclareMathOperator{\Ad}{Ad}
\DeclareMathOperator{\Hom}{Hom}
\DeclareMathOperator{\Gal}{Gal}
\numberwithin{equation}{section}
\let\oldbullet\bullet
\renewcommand{\bullet}{{\vcenter{\hbox{\tiny$\oldbullet$}}}}
\begin{document}

\title[The FPP Conjecture for $p$-adic Groups]{The FPP Conjecture for $p$-adic Groups}

\author{Dihua Jiang}
\address{School of Mathematics, University of Minnesota, Minneapolis, MN 55455, USA}
\email{dhjiang@math.umn.edu}

\author{Baiying Liu}
\address{Department of Mathematics\\
Purdue University\\
West Lafayette, IN, 47907, USA}
\email{liu2053@purdue.edu}

\author{Chi-Heng Lo}
\address{Department of Mathematics\\
National University of Singapore\\
119076, Singapore}
\email{{ch\_lo@nus.edu.sg}}

\author{Lucas Mason-Brown}
\address{Department of Mathematics\\
University of Texas at Austin\\
Austin, TX, 78712, USA}
\email{lucas.masonbrown@austin.utexas.edu}

\subjclass[2020]{Primary , ; Secondary , }
\keywords{Unitary representations, reductive groups, $p$-adic groups, Local Langlands Conjecture}

\thanks{The first-named author is partially supported by the Simons Grants: SFI-MPS-SFM-00005659 and 
SFI-MPS-TSM-00013449. The second-named author is partially supported by the NSF Grant DMS-1848058 and the Simons Foundation: Travel Support for Mathematicians. The fourth-named author is partially supported by NSF Grant DMS-2501977.
}
\date{\today}

\begin{abstract}
The FPP conjecture, proposed by J. Adams, S. Miller, and D. Vogan (\cite{Vog23}) and proved by D. Davis and L. Mason-Brown (\cite{DMB24}), imposes a strong upper bound on the infinitesimal character of a unitary representation of a real reductive group. In this paper, we formulate an analogous conjecture for $p$-adic groups. We prove our conjecture for pure rational forms assuming a version of the Local Langlands Correspondence.
\end{abstract}

\maketitle

\section{Introduction}

Let $k$ be a local field of characteristic zero and let $G$ be the $k$-points of a connected reductive algebraic group $\mathbf{G}$ defined over $k$. Let $\Pi_u(G)$ denote the set of equivalence classes of irreducible unitary $G$-representations. One of the basic unsolved problems in harmonic analysis and representation theory of $G$
is to classify $\Pi_u(G)$.

For $k$ Archimedean, J. Adams, S. Miller, and D. Vogan (\cite{Vog23}) proposed the {\sl FPP Conjecture}, which 
posits a strong upper bound on $\Pi_u(G)$.  Recall that every irreducible $G$-representation has an \emph{infinitesimal character}, which can be regarded as a $W$-orbit on the dual abstract Cartan $\Fh^*$ of $G$. A representation is said to have ``real infinitesimal character" if this $W$-orbit belongs to the canonical real form $\mathfrak{h}_{\RR}^*$ of $\mathfrak{h}^*$. The FPP conjecture of Adams-Miller-Vogan asserts that if $\pi$ is an irreducible unitary $G$-representation of real infinitesimal character, then one of the following holds:
\begin{itemize}
    \item[(1)] The infinitesimal character $\lambda$ of $\pi$ (regarded as a \emph{dominant} element of the real dual Cartan $\mathfrak{h}_{\RR}^*$) satisfies the inequalities
    $$\langle \lambda, \alpha^{\vee}\rangle \leq 1,$$
    for every simple co-root $\alpha^{\vee}$;
    \item[(2)]The representation $\pi$ is cohomologically induced (in the weakly good range) from an irreducible representation satisfying the bounds described in (1).
\end{itemize}
This conjecture was recently proved by Dougal Davis and the third-named author in \cite{DMB24}. 

In this paper we formulate an analogous conjecture in the non-Archimedean case, and we prove our conjecture in the case when $G$ is a pure rational form (i.e. pure inner to quasi-split). Our proof is conditional on the validity of the local Langlands correspondence, which is still a conjecture in general. However, our result is unconditional in many cases in which the local Langlands correspondence is known, including when $G$ is a classical group. 

To state our conjecture, we will need to recall some preliminaries on the local Langlands correspondence. Assume henceforth that $k$ is non-Archimedean and let $\Pi(G)$ denote the admissible dual of $G$, i.e. the set of equivalence classes of irreducible smooth $G$-representations.  By passing to smooth vectors, we can (and will) regard $\Pi_u(G)$ as a subset of $\Pi(G)$. Fix an algebraic closure $\bar{k}$ of $k$ with Galois group $\Gamma=\mathrm{Gal}(\bar{k}/k)$. Let $W_k$ denote the Weil group of $k$, and let $^LG = G^{\vee}\rtimes \Gamma$ denote the $L$-group of $G$. Recall that an $L$-parameter for $^LG$ is a continuous homomorphism
$$\phi: W_k \times \SL(2,\mathbb{C}) \to {^LG}$$
such that
\begin{itemize}
    \item[(i)] $\phi$ respects the maps from $W_k$ and $^LG$ to $\Gamma$;
    \item[(ii)] $\phi(W_k)$ consists of semisimple elements;
    \item[(iii)] The restriction of $\phi$ to $\SL(2,\mathbb{C})$ is holomorphic.
\end{itemize}
We write $\Phi(^LG)$ for the set of $G^{\vee}$-conjugacy classes of $L$-parameters for $^LG$. The conjectural Local Langlands Correspondence is a map
$$\mathrm{LLC}: \Pi(G) \to \Phi(^LG)$$
satisfying various properties, see \S \ref{sec: desiderata LLC} and \S \ref{sec: KL} below. 

According to \cite{SZ18}, the set $\Phi(^LG)$ is in bijection with $G$-conjugacy classes of triples $(P,\phi_0,\nu)$, where $P$ is a parabolic subgroup of $G$ with Levi decomposition $P=MN$, $\phi_0$ is a tempered (i.e. bounded) $L$-parameter for $^LM$, and $\nu$ is positive real-valued character of $M$ which is dominant with respect to $P$. The element $\nu$ is called the ``exponent" of the $L$-parameter and is denoted by $\nu(\phi)$. We can regard $\nu(\phi)$ as a dominant element of the dual real Cartan $\mathfrak{h}^*_{\mathbb{R}}$.

The \emph{infinitesimal character} of an $L$-parameter $\phi$ is the map $\lambda_{\phi}: W_k \to {}^LG$ defined by
$$\lambda_{\phi}(w) = \phi \left(w, \begin{pmatrix}|w|^{\frac{1}{2}} & 0 \\0 & |w|^{-\frac{1}{2}}\end{pmatrix}\right).$$
We can regard $\lambda_{\phi}$ as an $L$-parameter by composing with the projection $W_k \times \SL(2,\mathbb{C}) \to W_k$. In this way, we can define the exponent $\nu(\lambda_{\phi})$ of $\lambda_\phi$. Now, our version of the FPP conjecture can be stated as follows.

\begin{conj}[FPP conjecture for $p$-adic groups]\label{conj:FPP}
Assume that $\pi \in \Pi_u(G)$ is an irreducible unitary $G$-representation and let $\phi = \mathrm{LLC}(\pi)$. Then the element $\nu(\lambda_{\phi}) \in \mathfrak{h}_{\RR}^*$ satisfies
$$\langle \nu(\lambda_{\phi}), \alpha^{\vee}\rangle \leq 1,$$
for every simple co-root $\alpha^{\vee}$.
\end{conj}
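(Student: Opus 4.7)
The plan is to combine the Silberger-Zink decomposition of $L$-parameters, the LLC desiderata, and unitarity bounds for modules over affine Hecke algebras attached to pure rational forms. First, I would apply the Silberger-Zink bijection to write $\phi \leftrightarrow (P,\phi_0,\nu)$ with $P = MN$, $\phi_0$ tempered for ${}^LM$, and $\nu$ positive real and $P$-dominant. By the LLC desiderata, $\pi$ must be the Langlands quotient of the standard module $I_P^G(\sigma\otimes\nu)$ for some $\sigma$ tempered in the packet of $\phi_0$. Directly from the definition of $\lambda_\phi$ one obtains the decomposition
$$\nu(\lambda_\phi) = \nu + \tfrac{1}{2}\,h_{\phi_0},$$
where $h_{\phi_0}$ is the (dominant representative of the) neutral element of the $\mathfrak{sl}_2$-triple in $\mathfrak{g}^\vee$ determined by the $\SL(2,\mathbb{C})$-component of $\phi_0$.

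Next, by Jacobson-Morozov and the weighted Dynkin diagram theorem, one has $\langle h_{\phi_0},\alpha^\vee\rangle \in \{0,1,2\}$ for every simple coroot $\alpha^\vee$, so the tempered contribution $\tfrac{1}{2}h_{\phi_0}$ alone already satisfies the FPP bound -- with equality attained precisely by the Steinberg parameter. It therefore suffices to show $\langle\nu,\alpha^\vee\rangle \le 1 - \tfrac{1}{2}\langle h_{\phi_0},\alpha^\vee\rangle$ for every simple coroot.

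To bound $\nu$ I would invoke the affine Hecke algebra realization of Bernstein blocks for pure rational forms, following Heiermann and Solleveld. This equivalence identifies the Bernstein block containing $\pi$ with the module category of an extended affine Hecke algebra $\mathcal{H}$, respects the standard/tempered/Langlands-quotient trichotomy, and intertwines unitarity of smooth $G$-representations with unitarity of $\mathcal{H}$-modules under the natural $*$-involution. Under this equivalence the central character of the Hecke module attached to $\pi$ is identified (after an explicit translation by $\tfrac{1}{2}h_{\phi_0}$) with $\nu(\lambda_\phi)$, and the desired inequality then follows from the known bounds on central characters of unitary affine Hecke algebra modules, developed in particular by Barbasch-Ciubotaru.

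The main obstacle is the precise compatibility of structures across the Heiermann-Solleveld equivalence: one must verify that unitarity in the smooth-representation sense for $G$ corresponds exactly to unitarity on the Hecke side for the correct $*$-involution, and that the identification of central characters absorbs the tempered contribution $\tfrac{1}{2}h_{\phi_0}$ in the required way. The restriction to pure rational forms enters here in guaranteeing the availability of a clean Hecke-algebra model with a well-understood unitary dual; extending beyond pure rational forms would require inputs into the LLC that remain out of reach in general.
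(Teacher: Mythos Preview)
Your approach is genuinely different from the paper's and, as written, has a real gap at the Hecke-algebra step.

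First, the decomposition $\nu(\lambda_\phi)=\nu+\tfrac{1}{2}h_{\phi_0}$ is more delicate than you suggest. The element $h_{\phi_0}$ lives naturally in $\mathfrak{m}^\vee$, and the exponent $\nu(\lambda_\phi)$ is by definition the \emph{dominant} hyperbolic part of $\lambda_\phi(\Fr)$ for $G^\vee$. Making $\nu+\tfrac{1}{2}h_{\phi_0}$ dominant in $G^\vee$ may involve a Weyl conjugation that mixes the two summands, so the clean splitting into ``tempered contribution'' and ``$\nu$-contribution'' on each simple coroot is not automatic. In particular, your reduced inequality $\langle\nu,\alpha^\vee\rangle\le 1-\tfrac{1}{2}\langle h_{\phi_0},\alpha^\vee\rangle$ can be strictly stronger than the FPP bound and need not hold for unitary $\pi$.

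Second, and more seriously, the bound you invoke at the end does not exist in the generality you need. The Barbasch--Ciubotaru results concern the spherical (or Iwahori-spherical) unitary dual for split groups and specific graded Hecke algebras, often with equal parameters; they do not furnish a uniform statement of the form ``for every extended affine Hecke algebra arising from a Bernstein block of a pure rational form, every unitary module has central character in the FPP''. The Hecke algebras coming out of the Heiermann--Solleveld construction can have unequal parameters and nontrivial twisting, and no FPP-type theorem is available for them in general. The ``main obstacle'' you flag (transfer of unitarity and matching of central characters) is already nontrivial, but even granting it, you would still be appealing to an unproved theorem.

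By contrast, the paper avoids Hecke algebras entirely. It introduces a Levi ${}^LM_{\leq 1}$ depending only on $\nu(\lambda_\phi)$, proves geometrically that the Vogan variety and its symmetry group for $\lambda$ are unchanged when one restricts to $M_{\leq 1}$, and then uses the Kazhdan--Lusztig hypothesis (which you never invoke) to deduce that $\Ind_{P_{\leq 1}}^G$ is a bijection on irreducibles with the given infinitesimal character. For a Hermitian $\pi$ violating FPP, this yields an \emph{unbounded} one-parameter family $\pi_s=\Ind_{P_{\leq 1}}^G(\pi_{\leq 1}\otimes\nu_s)$ of irreducible Hermitian representations, and non-unitarity follows from general facts about the topology of the unitary dual. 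The argument is uniform and uses only the assumed desiderata (LC) and (KL); no case-by-case Hecke-algebra analysis is required.
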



\begin{rmk}
The set 
$$\{x \in \mathfrak{h}_{\mathbb{R}}^* \mid 0 \leq \langle  x,\alpha^{\vee}\rangle \leq 1, \text{ for all simple co-roots } \alpha^{\vee}\}$$
is called the ``fundamental parallelepiped" (FPP) of $G$. Our conjecture asserts that if $\pi$ is unitary, then $\nu(\lambda_{\mathrm{LLC}(\pi)})$ belongs to this set. 
\end{rmk}

Our main result is a proof of Conjecture \ref{conj:FPP} in the case when $G$ is a pure rational form, assuming a version of the local Langlands correspondence.

\begin{thm}\label{main thm intro}
Assume that $G$ is a pure rational form, and that there exists a map $\mathrm{LLC}$ compatible with the Langlands classification and satisfying the Kazhdan-Lusztig hypothesis (see \S \ref{sec: desiderata LLC} and \S \ref{sec: KL}).
Then Conjecture \ref{conj:FPP} is true.
\end{thm}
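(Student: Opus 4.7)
The plan is to deduce Theorem \ref{main thm intro} from the Archimedean FPP theorem of \cite{DMB24} by passing through the Kazhdan--Lusztig/graded Hecke algebra description of representations with a fixed infinitesimal character, which is available for pure rational forms under the hypotheses of the theorem.

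First, by the assumed compatibility of $\LLC$ with the Langlands classification, the parameter $\phi = \LLC(\pi)$ decomposes as a triple $(P=MN, \phi_0, \nu)$ with $\phi_0$ a tempered parameter for ${}^LM$, and $\pi$ is realized as the Langlands quotient of $\Ind_P^G(\tau \otimes e^\nu)$ where $\tau$ is the tempered representation of $M$ attached to $\phi_0$. A direct computation from the formula defining $\lambda_\phi$ yields $\nu(\lambda_\phi) = \nu + \tfrac{1}{2}h_{\phi_0}$, where $h_{\phi_0}$ is the semisimple element of the $\Fs\Fl_2$-triple in $\Fm^\vee$ associated to $\phi_0|_{\SL(2,\BC)}$. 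The desired inequality is thus a simultaneous bound on the exponent $\nu$ and on this tempered contribution, both of which must be controlled by the unitarity of $\pi$.

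Second, under the KL hypothesis for the pure rational form $G$, the irreducible $G$-representations with infinitesimal character $\lambda_\phi$ (and those of all pure inner forms of $G$ simultaneously) are identified with simple modules over a graded affine Hecke algebra $\BH$ attached to the centralizer $G^\vee_{\lambda_\phi}$, in a way compatible with parabolic induction and the Langlands classification, and such that the central character of the $\BH$-module corresponding to $\pi$ equals $\nu(\lambda_\phi)$. Combining this with the known transfer of unitarity between $p$-adic representations and graded Hecke algebra modules (in the style of Barbasch--Moy and Ciubotaru--Opdam) for pure inner forms, we see that the $\BH$-module corresponding to the unitary $\pi$ is itself unitary.

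Third, the same graded Hecke algebra $\BH$ governs representations with infinitesimal character $\lambda_\phi$ of a naturally associated real reductive group $G_{\BR}$ via Vogan/ABV-style Archimedean theory, compatibly with both unitarity and central character. The unitary $\BH$-module from the previous step therefore corresponds to a unitary representation of $G_\BR$ with real infinitesimal character $\nu(\lambda_\phi)$, and the Archimedean FPP theorem of \cite{DMB24} applied to this representation yields $\langle \nu(\lambda_\phi), \alpha^\vee\rangle \leq 1$ for every simple coroot $\alpha^\vee$ of $G$; the cohomologically-induced alternative in the Archimedean statement is already accounted for by the Langlands classification step. The main obstacle is the faithfulness of the two Hecke algebra transfers with respect to unitarity at possibly non-integral infinitesimal characters, together with the precise matching of the simple coroots of $G$ against those entering $\BH$ and $G_\BR$; this is where the hypothesis of a pure rational form is essential, as it guarantees that the full KL machinery is simultaneously available for all pure inner forms.
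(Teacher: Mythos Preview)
Your plan takes a completely different route from the paper. The paper never leaves the $p$-adic world: it defines a Levi subgroup $M_{\leq 1}$ attached to the infinitesimal character $\lambda$, proves (using only \ref{LC} and \ref{KL}) that the Vogan varieties and component groups for $\lambda$ and $\lambda_{\leq 1}$ coincide, deduces that parabolic induction $\Ind_{P_{\leq 1}}^G$ sends irreducibles to irreducibles (Theorem~\ref{thm ind irred}), and then shows that any Hermitian $\pi$ outside the FPP sits in an unbounded one-parameter family of irreducible Hermitian representations, hence is non-unitary by elementary facts about the topology of the unitary dual.

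Your strategy, by contrast, wants to transport the problem through a graded Hecke algebra to a real group and then invoke \cite{DMB24}. There are at least three genuine gaps.

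First, the two unitarity transfers you invoke are \emph{not} consequences of Desiderata \ref{LC} and \ref{KL}. The Barbasch--Moy reduction and its descendants require specific Hecke algebra isomorphisms, compatibility of Hermitian forms, and signature-character arguments that go well beyond a multiplicity formula; likewise, the identification of the Hecke-algebra unitary dual with a slice of a real-group unitary dual is only known in restricted settings and is certainly not available for a general pure rational form under your hypotheses. You acknowledge this as ``the main obstacle,'' but it is the entire content of the argument.

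Second, the root systems do not match. The graded Hecke algebra attached to $\lambda_\phi$ has root system governed by $H_{\lambda_\phi} = Z_{G^\vee}(\lambda_\phi(W_k))$, which is typically a proper reductive subgroup of $G^\vee$; the FPP inequality you would extract from \cite{DMB24} on the real side would then involve the simple coroots of that subgroup (or of your $G_{\BR}$), not the simple coroots of $G$. You assert that ``the precise matching of the simple coroots'' can be arranged, but give no mechanism.

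Third, you dismiss case~(2) of the Archimedean FPP (cohomological induction in the weakly good range) with the remark that it is ``already accounted for by the Langlands classification step.'' This is not justified: cohomological induction on the real side has no counterpart in the $p$-adic Langlands classification, and nothing in your setup rules out landing in that case.
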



In the Archimedean case, the FPP conjecture was proved in \cite{DMB24} using techniques from Hodge theory. In the non-Archimedean case, such techniques are not available. 
To prove Theorem \ref{main thm intro}, we instead show that any irreducible Hermitian representation $\pi$ lying `outside' the FPP must belong to an \emph{unbounded} analytic family of irreducible Hermitian representations. We then appeal to general facts about the topology of the unitary dual to deduce that $\pi$ is not unitary. 

The idea of proving non-unitarity by constructing an unbounded family of irreducibly induced Hermitian representations has been applied previously in the literature, e.g. in \cite{Tad09}. For symplectic and odd special orthogonal groups, Tadi{\'c} established an irreducibility criterion (\cite[Proposition 3.2]{Tad09}) in terms of supercuspidal support, and used it to deduce an upper bound on the unitary dual (\cite[Proposition 4.6]{Tad09}). In the cases in which it applies, Tadi{\'c}'s bound is better than ours. However, it does not seem possible to generalize Tadi{\'c}'s result beyond classical groups. 
The novelty of our result is that it establishes an irreducibility criterion (Theorem \ref{thm ind irred}) assuming only the existence of a Local Langlands Correspondence satisfying several desiderata. Our proof makes use of the geometry of Vogan varieties, and is uniform for all pure rational forms.

We conclude with several remarks regarding the significance of our main theorem and its relation to previous work. First, we remark that our bound on $\Pi_u(G)$ is \emph{sharp}, in the following sense: for every group $G$, there are unitary representations for which all of the inequalities in Conjecture \ref{conj:FPP} are equalities, for example the trivial and Steinberg representations. This suggests that it may be possible to use the FPP bound as a replacement for the unitarity bound in various applications. Second, the FPP conjecture provides a setting in which arithmetic information extracted from the local $L$-parameter (i.e. the exponent of the infinitesimal character) controls the harmonic analysis on $G$ (i.e. the structure of unitary $G$-representations). Among other such examples, let us mention the 
local {\sl Gan-Gross-Prasad Conjecture} (\cite{GGP12}) and the local {\sl Wavefront Set Conjecture} of Jiang-Liu-Zhang (\cite{JLZ22}). Third, we note that in many cases, the required form of the local Langlands correspondence is known; in such cases, Theorem \ref{main thm intro} is true unconditionally. For example, this is true in the following cases, see \cite[Theorem 5.4]{Solleveld2025}:
\begin{itemize}
    \item all representations of classical groups (i.e. symplectic, (special) orthogonal, unitary, and general (s)pin groups),
    \item principal series representations of quasi-split groups, and
    \item unipotent representations of arbitrary groups.
\end{itemize}

As explained above, our main result provides a uniform upper bound on the unitary dual of any $p$-adic reductive group, conditional on the validity of the Langlands correspondence. To our knowledge, ours is the only result of this type. However, there are various special cases (i.e. special classes of groups and types of representations) for which the unitary dual is known completely. In these cases, the FPP bound can be verified directly. Below, we give a summary of previous such results. Using Hecke algebra techniques, Barbasch, Ciubotaru, Moy, and Pantano classified the spherical unitary representations of split $p$-adic groups in \cite{BM89, BM93, Ciu05, BC09, Bar10}; see also \cite{BCP08} for an expository survey. The full unitary dual has also been classified for the following classes of groups: $\GL_n(k)$ (\cite{Tad86}), $G_2(k)$ (\cite{Mui97}), and general linear groups over central division algebras (\cite{BR04, Sec09}). For quasisplit classical groups, the Whittaker-generic unitary representations were classified in \cite{LMT04}.

\subsection{Acknowledgments}
The authors would like to thank Freydoon Shahidi for constant support. The authors also would like to thank Jeff Adams and David Vogan for helpful discussions, comments and suggestions.

\section{Preliminaries on the local Langlands correspondence}

In this section, we introduce notation and preliminaries on the Langlands classification and the Kazhdan-Lusztig Hypothesis.

\subsection{Notation and preliminaries}\label{sec notation}

Let $k$ be a non-Archimedean local field of characteristic $0$ with residue field $\mathbb{F}_q$. Fix an algebraic closure $\bar{k}$ of $k$ with Galois group $\Gamma=\mathrm{Gal}(\bar{k}/k)$. Let $\mathbf{G}$ be a connected reductive algebraic group defined over $k$, and let $G= \mathbf{G}(k)$ denote the $k$-point of $\mathbf{G}$. Choose a maximal $k$-split torus $\mathbf{A}_0 \subset \mathbf{G}$ and a maximal $k$-torus $\mathbf{T}_0 \subset \mathbf{G}$ containing $\mathbf{A}_0$. Let $\mathbf{P}_0 \subset \mathbf{G}$ be a minimal $k$-parabolic subgroup containing $\mathbf{T}_0$ and let $\mathbf{P}_0=\mathbf{M}_0\mathbf{N}_0$ denote the Levi decomposition corresponding to $\mathbf{T}_0$. Let $R=R(G)$ denote the roots of $\mathbf{T}_0$ on $\mathbf{G}$ and choose a positive system $R^+=R^+(G)$ compatible with $\mathbf{P}_0$. Denote the corresponding set of simple roots by $\Delta=\Delta(G)$. 
 
Let $(G^{\vee},H^{\vee},B^{\vee},\{X_{\alpha}\})$ denote the pinned complex dual group associated to $\mathbf{G}$. Then $\Gamma$ acts on $G^{\vee}$ by pinned automorphisms. The $L$-group of $\mathbf{G}$ is $^LG = G^{\vee} \rtimes \Gamma$. We will need to recall some elementary facts about parabolic subgroups of $G$ and ${}^L G$ following \cite[\S 3]{Bor79}. 

Let $p(\mathbf{G}/k)$ denote the set of $G$-conjugacy classes of $k$-parabolic subgroups of $\mathbf{G}$ and let $p(\mathbf{G})_k$ denote the set of $\Gamma$-stable $G$-conjugacy classes of parabolic subgroups of $\mathbf{G}$. Note that $p(\mathbf{G}/k)$  is naturally a subset of $p(\mathbf{G})_k$ and $p(\mathbf{G})_k=p(\mathbf{G}/k)$ if $\mathbf{G}$ is quasi-split. There is a canonical bijection between $p(\mathbf{G})_k$ and the set of $\Gamma$-stable subsets of $\Delta$. Under this bijection $p(\mathbf{G}/k)$ corresponds to the set of $\Gamma$-stable subsets containing the simple roots in $\mathbf{M}_0$. A $k$-parabolic subgroup of $\mathbf{G}$ is said to be \emph{standard} if it contains $\mathbf{P}_0$. Every conjugacy class in $p(\mathbf{G}/k)$ contains a unique standard representative.

A closed subgroup of $^LG$ is called ``parabolic" if it is the normalizer of a parabolic subgroup of $G^{\vee}$ and meets every connected component of $^LG$. Let $p({}^L G)$ denote the set of $G^{\vee}$-conjugacy classes of parabolic subgroups of ${}^L G$. There is a canonical bijection between $p({}^L G)$ and $p(\mathbf{G})_k$. We say that a parabolic subgroup of ${}^L G$ is \emph{relevant} if it corresponds under this bijection to an element of $p(\mathbf{G}/k)$. A parabolic subgroup of $^LG$ is said to be standard if it contains $B^{\vee}$. Every conjugacy class in $p({}^L G)$ contains a unique standard representative.

\subsection{Langlands classification of irreducible representations}\label{sec: Langlands classification}

Let $\Pi(G)$ (resp. $\Pi_t(G)$) denote the set of equivalence classes of irreducible smooth (resp. irreducible smooth tempered) $G$-representations. We will now recall the statement of the Langlands classification for $p$-adic groups, which reduces the classification of $\Pi(G)$ to tempered representations of Levi subgroups of $G$.

For any $k$-Levi subgroup $\mathbf{M} \subset \mathbf{G}$, let $X^*(M) = X^*(\mathbf{M})_k$ denote the abelian group of rational characters $\mathbf{M} \to \GL_1$ defined over $k$ and let
$$\mathfrak{a}_M^* := X^*(M) \otimes_{\ZZ} \RR.$$
Note that there is a canonical isomorphism
\begin{equation}\label{eq:astar}\mathfrak{a}_M^* \simeq \Hom(M,\RR_{>0}).\end{equation}
see \cite[(1.2.2)]{Sil78}.

A \emph{standard triple} for $\mathbf{G}$ is a triple of the form $(P,\pi_t,\nu)$, where 
\begin{itemize}
    \item $P$ is the $k$-points of a standard $k$-parabolic subgroup $\mathbf{P}$ with Levi decomposition $\mathbf{P}=\mathbf{M}\mathbf{N}$; 
    \item $\pi_t$ is an element of $\Pi_t(M)$;
    \item $\nu$ is an element of $\mathfrak{a}_M^*$ satisfying the dominance condition
    $$\langle \nu, \alpha^{\vee} \rangle > 0, \qquad \forall \alpha \in \Delta(G) \setminus \Delta(M).$$
\end{itemize}
Using the isomorphism (\ref{eq:astar}) we can also regard $\nu$ as a positive real-valued character of $M$.

\begin{thm}[{\cite{Sil78, BW80, Kon03}}]\label{thm:Langlandsreps}
Suppose $(P,\pi_t,\nu)$ is a standard triple for $\mathbf{G}$. Then the induced representation $\Ind^G_P (\pi_t \otimes \nu)$ has a unique irreducible quotient $\pi(P,\pi_t,\nu)$. Moreover $(P,\pi_t,\nu) \mapsto \pi(P,\pi_t,\nu)$ defines a bijection between the set of standard triples and $\Pi(G)$.
\end{thm}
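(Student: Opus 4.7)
The plan is to establish the theorem in three stages: existence and uniqueness of the Langlands quotient $\pi(P,\pi_t,\nu)$ for a given standard triple, surjectivity of the map from standard triples to $\Pi(G)$, and injectivity.

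First, given a standard triple $(P,\pi_t,\nu)$ with $P=MN$, write $I := \Ind_P^G(\pi_t \otimes \nu)$ and let $\bar{P} = M\bar{N}$ denote the opposite parabolic. I would analyze $I$ via the Knapp--Stein intertwining operator
$$J_{\bar{P}|P}(\nu)\colon I \longrightarrow \Ind_{\bar{P}}^G(\pi_t \otimes \nu),$$
defined by integration over $\bar{N}$. The integral converges absolutely in a positive cone and extends meromorphically in $\nu$; the strict dominance $\langle \nu,\alpha^\vee\rangle>0$ for $\alpha\in\Delta(G)\setminus\Delta(M)$ places $\nu$ at a point of holomorphy. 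Computing the Jacquet module $I_N$ via the Bernstein--Zelevinsky geometric lemma, one sees that $\pi_t\otimes\nu$ itself occurs as a subquotient with multiplicity one, associated to the trivial Weyl element, while all other exponents are strictly smaller in the partial order determined by $\Delta(G)\setminus\Delta(M)$. Combining this with Frobenius reciprocity and Casselman's criterion for temperedness, one shows that the image of $J_{\bar{P}|P}(\nu)$ is irreducible and coincides with the unique irreducible quotient of $I$; this image is declared to be $\pi(P,\pi_t,\nu)$.

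For surjectivity, given $\pi \in \Pi(G)$, I would select a standard parabolic $P = MN$ that is minimal with respect to the property that $\pi$ embeds into $\Ind_{\bar P}^G \sigma$ for some irreducible $\sigma \in \Pi(M)$ which is tempered modulo the split component of $M$. Existence of such $P$ follows from Frobenius reciprocity: the Jacquet module $\pi_N$ has an irreducible subquotient, and by iteratively descending to a smaller Levi whenever a further factorization is possible one arrives at a minimal choice. Writing $\sigma = \pi_t \otimes \nu$ with $\pi_t \in \Pi_t(M)$ and $\nu \in \mathfrak{a}_M^*$, the minimality of $P$ together with Casselman's temperedness criterion forces the strict dominance condition on $\nu$. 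Dualizing the embedding $\pi \hookrightarrow \Ind_{\bar{P}}^G(\pi_t\otimes \nu)$ and applying the construction of the first stage realizes $\pi$ as the Langlands quotient $\pi(P,\pi_t,\nu)$.

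For injectivity, if $\pi(P_1,\pi_1,\nu_1) \cong \pi(P_2,\pi_2,\nu_2)$, then the Jacquet modules of the common representation with respect to the two parabolics coincide with those of the corresponding induced representations, and strict dominance of each $\nu_i$ together with Casselman's criterion isolates the "leading" exponent in each Jacquet module as the unique dominant one. This forces $\nu_1 = \nu_2$, and the equalities $\pi_{t,1} \cong \pi_{t,2}$ and $P_1 = P_2$ follow. The main obstacle throughout is the careful Jacquet-module and exponent bookkeeping that threads through all three stages: one must verify holomorphy of the intertwiner at the dominant $\nu$, control which subquotients of the geometric-lemma filtration actually contribute, and separate leading from non-leading exponents to secure both irreducibility of the Langlands quotient and uniqueness of the triple. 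These technicalities, which distinguish the $p$-adic case from its Archimedean analogue, are precisely what is worked out in the references to Silberger, Borel--Wallach, and Konno.
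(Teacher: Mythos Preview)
The paper does not provide its own proof of this theorem; it is stated as a known result with citations to Silberger, Borel--Wallach, and Konno, and is used throughout as a black box. Your outline is a faithful sketch of the standard argument found in those references---uniqueness of the Langlands quotient via the intertwining operator and leading-exponent analysis of the Jacquet module, surjectivity by locating a dominant exponent in the Jacquet module of an arbitrary irreducible, and injectivity by comparison of leading exponents---so there is nothing in the paper itself to compare against.
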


We will use the notation
$$\pi \leftrightarrow (P,\pi_t,\nu)$$
to indicate that $\pi$ corresponds to the triple $(P,\pi_t,\nu)$ under the bijection in Theorem \ref{thm:Langlandsreps}.

\subsection{Langlands classification of parameters}\label{lc parameter}

Recall that an $L$-parameter {for $^LG$} is a continuous homomorphism
$$\phi: W_k \times \SL_2(\BC) \to {}^LG,$$
such that
\begin{itemize}
    \item[(i)] $\phi$ respects the maps from $W_k$ and $^LG$ to $\Gamma$;
    \item[(ii)] $\phi(W_k)$ consists of semisimple elements;
    \item[(iii)] The restriction of $\phi$ to $\SL_2(\BC)$ is holomorphic.
\end{itemize}

We say that $\phi$ is \emph{tempered} if the projection to $G^{\vee}$ of $\mathrm{Im}(\phi)$ is bounded. Define
\begin{align*}
P(^LG) &= \{\text{$L$-parameters $\phi: W_k \times \SL_2(\mathbb{C}) \to {}^LG$}\},\\
P_t(^LG) &= \{\text{tempered $L$-parameters $\phi: W_k \times \SL_2(\mathbb{C}) \to {}^LG$}\}.
\end{align*}
Note that $G^{\vee}$ acts by conjugation on $P(^LG)$ and preserves the subset $P_t(^LG)$. We denote
\begin{align*}
\Phi(^LG) &= P(^LG)/\sim,\\
\Phi_t(^LG) &= P_t(^LG)/\sim.
\end{align*}
For $\phi \in P(^LG)$, we write $[\phi]$ for the $G^{\vee}$-conjugacy class of $\phi$.

Let $S_{\phi}$ denote the finite component group of the centralizer of $\phi$, i.e. 
\[S_{\phi} = Z_{G^{\vee}}(\phi)/ Z_{G^{\vee}}(\phi)^{\circ}.\] 
A (tempered) \emph{enhanced $L$-parameter} is a pair $(\phi,\tau)$ consisting of a (tempered) $L$-parameter $\phi$ and an irreducible representation $\tau$ of $S_{\phi}$. Define
\begin{align*}
P^{\mfr{e}}(^LG) &= \{\text{enhanced $L$-parameters $(\phi,\tau)$}\},\\
P_t^{\mfr{e}}(^LG) &= \{\text{tempered enhanced $L$-parameters $(\phi,\tau)$}\}.
\end{align*}
Again, $G^{\vee}$ acts by conjugation on $P^{\mfr{e}}(^LG)$ and preserves the subset $P_t^{\mfr{e}}(^LG)$. We denote
\begin{align*}
\Phi^{\mfr{e}}(^LG) &= P^{\mfr{e}}(^LG)/\sim,\\
\Phi_t^{\mfr{e}}(^LG) &= P_t^{\mfr{e}}(^LG)/\sim.
\end{align*}

For any $k$-Levi subgroup $\mathbf{M} \subset \mathbf{G}$, let $Z({}^L M)^0_h$ denote the subgroup of the complex torus $Z({}^L M)^0$ consisting of hyperbolic elements. Then there is a natural identification
$$\mathfrak{a}_M^* \simeq Z({}^LM)^0_h, \qquad \nu \leftrightarrow z(\nu),$$
see \cite[Corollary 4.3]{SZ18}. If $z \in Z({}^L M)^0$ and $\phi \in P(^LM)$, we define the \emph{$z$-twist} of $\phi$ by
$$\phi^z: W_k \times \SL_2(\BC) \to {}^LM, \qquad \phi^z(w,x) = \phi(w,x)z^{d(w)},$$
where $d(w)$ is defined by $|w|=q^{d(w)}$.
This is again an $L$-parameter.

A \emph{standard triple} for $^LG$ is a triple of the form $(P,[\phi_t],\nu)$, where $P$ and $\nu$ are as in \S \ref{sec: Langlands classification}, and $[\phi_t] \in \Phi_t({}^L M)$. An \emph{enhanced standard triple} for $G$ is a triple of the form $(P,([\phi_t],\tau),\nu)$, where $([\phi_t],\tau) \in \Phi_t^{\mfr{e}}(^LM)$.

\begin{thm}[{ \cite[Theorem 4.6]{SZ18}}]\label{thm:Langlandsparameters}
The following are true:
\begin{itemize}
    \item[(i)] Suppose $(P,[\phi_t],\nu)$ is a standard triple for $^LG$. Then $({}^LM \hookrightarrow {}^LG) \circ \phi_t^{z(\nu)}$ is an $L$-parameter for $^LG$. Moreover, $(P,[\phi_t],\nu) \mapsto ({}^LM \hookrightarrow {}^LG) \circ [\phi_t^{z(\nu)}]$ defines a bijection between the set of standard triples for ${}^L G$ and $\Phi(^LG)$.
    \item[(ii)] If $\phi \in \Phi(^LG)$ corresponds to $(P,[\phi_t],\nu)$ under the bijection in (i), then the natural group homomorphism $S_{\phi_t} \to S_{\phi}$ is an isomorphism. Thus, the bijection in (i) lifts to a bijection between the set of enhanced standard triples for $^LG$ and $\Phi^{\mfr{e}}(^LG)$.
\end{itemize}
\end{thm}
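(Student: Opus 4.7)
The plan is to prove part (i) by extracting from an arbitrary $L$-parameter $\phi$ a canonical compact--hyperbolic decomposition of $\phi|_{W_k}$ and reading off a standard triple from it. First I would write $\phi(w) = \phi_c(w)\cdot z^{d(w)}$, where $\phi_c$ has bounded image in $G^\vee$, $z$ is a hyperbolic semisimple element of $G^\vee$ commuting with the entire image of $\phi_c$, and $d(w)$ is determined by $|w| = q^{d(w)}$. Such a decomposition exists by applying the Jordan decomposition to $\phi(\mathrm{Fr})$ for a chosen Frobenius lift $\mathrm{Fr}\in W_k$, extracting the hyperbolic part, and using that $\phi(I_k)$ is already bounded. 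After $G^\vee$-conjugation I would place $z$ in the hyperbolic part of $H^\vee$ and inside the center $Z(M^\vee)^0$ of a unique standard $\Gamma$-stable Levi $M^\vee = Z_{G^\vee}(z)$, translating $z$ into an element $\nu\in \mathfrak{a}_M^*$ strictly dominant on $\Delta(G)\setminus \Delta(M)$.

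Setting $\phi_t := \phi^{z^{-1}}$ then produces an $L$-parameter whose $G^\vee$-image is bounded. Because $\phi_t$ commutes with $z$, it lands in the $L$-Levi ${}^LM = N_{{}^LG}(M^\vee)$, so $\phi_t$ defines a tempered parameter for ${}^LM$ and $({}^LM\hookrightarrow{}^LG)\circ\phi_t^{z(\nu)} = \phi$. Uniqueness of the triple $(P,[\phi_t],\nu)$ is then forced by the strict dominance condition, which pins down the standard Levi (it is the unique one whose center contains the intrinsic hyperbolic element $z(\nu)$) and thereby the untwist $\phi_t$. This gives the bijection between standard triples for ${}^LG$ and $\Phi({}^LG)$. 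For part (ii), I would show that $Z_{G^\vee}(\phi) = Z_{M^\vee}(\phi_t)$ as algebraic subgroups. The inclusion $\supset$ is immediate since $z(\nu)$ is central in $M^\vee$. Conversely, any $g\in Z_{G^\vee}(\phi)$ commutes with the intrinsically defined hyperbolic part $z(\nu)$ of $\phi$, hence lies in $Z_{G^\vee}(z(\nu)) = M^\vee$ by strict dominance of $\nu$, and then centralizes $\phi_t$ because $z(\nu)$ is central in $M^\vee$. Taking component groups yields the isomorphism $S_{\phi_t}\xrightarrow{\sim} S_\phi$, under which enhancements correspond.

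The main obstacle I anticipate is verifying canonicity of the compact--hyperbolic decomposition in the presence of the Galois action, which may make ${}^LG$ disconnected. One must show that the hyperbolic part $z$ of $\phi(\mathrm{Fr})$ is independent of the choice of Frobenius lift and of auxiliary conjugations, so that $[\nu]$ and the standard Levi are well defined on the $G^\vee$-orbit of $\phi$; and one must check that the $\Gamma$-equivariance built into the definition of an $L$-parameter forces $M^\vee = Z_{G^\vee}(z)$ to be the dual of a genuine $k$-Levi of $\mathbf{G}$ (equivalently, that the parabolic it determines is relevant), so that $(P,[\phi_t],\nu)$ really is a standard triple in the sense of \S\ref{lc parameter}. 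Once these compatibilities are established, both bijections fall out.
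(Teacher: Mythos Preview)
The paper does not supply its own proof of this theorem; it is quoted directly from \cite[Theorem~4.6]{SZ18}, and the only trace of the argument in the paper is Proposition~\ref{prop SZ18}, which records the key facts extracted from the Silberger--Zink proof. Your sketch is precisely the Silberger--Zink argument: take the hyperbolic part $z_\phi$ of $\phi(\mathrm{Fr})$, show it is independent of the choice of Frobenius lift (this is Proposition~\ref{prop SZ18}), conjugate it into the dominant chamber of $H^\vee$ so as to read off the standard Levi ${}^LM_\phi = Z_{{}^LG}(z_\phi)$ together with the exponent $\nu$, and then untwist by $z_\phi$ to get the tempered factor $\phi_t$. Your proof of part~(ii), namely that $Z_{G^\vee}(\phi) = Z_{M^\vee}(\phi_t)$ because any element centralizing $\phi$ must already commute with its intrinsic hyperbolic part and hence lie in $M^\vee$, is again the argument in \cite{SZ18}. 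So your approach is correct and coincides with the one the paper is citing.

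One remark on the obstacle you raise at the end: the relevance issue does not actually arise in the proof of the bijection in~(i). The classification of $L$-parameters by standard triples is a statement purely on the dual side, indexed by standard parabolics of ${}^LG$ (equivalently, $\Gamma$-stable subsets of $\Delta(G^\vee)$); it holds for every $L$-parameter, relevant or not. Relevance only becomes a constraint later, in \S\ref{sec: desiderata LLC}, when one matches parameters with representations of a fixed inner form. The genuine technical point you should worry about is the first one you name: showing that the hyperbolic part is independent of the Frobenius lift and well defined on the $G^\vee$-conjugacy class, which is exactly what \cite[Proposition~5.3]{SZ18} (restated here as Proposition~\ref{prop SZ18}) establishes.
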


We use the notation $[\phi] \leftrightarrow (P,[\phi_t],\nu)$ (resp. $([\phi],\tau) \leftrightarrow (P,([\phi_t],\tau_t),\nu)$ to indicate that $[\phi]$ (resp. $([\phi],\tau)$) corresponds to the triple $(P,[\phi_t],\nu)$ (resp. $(P,([\phi_t],\tau_t),\nu)$) under the bijections in Theorem \ref{thm:Langlandsparameters}. Finally, we recall certain facts from the proof of Part (i) of Theorem \ref{thm:Langlandsparameters}.

\begin{prop}[{\cite[Proposition 5.3(i), 5.5]{SZ18}}]\label{prop SZ18}
    Suppose that $[\phi] \leftrightarrow (P=MN,[\phi_t],\nu)$ and take a representative $\phi$. Then the hyperbolic part $z_{\phi}:=\phi(\Fr)$ is independent of the choice of $\Fr$. Moreover, let ${}^L P_{\phi}={}^L M_{\phi}{}^L N_{\phi}$ be the parabolic subgroup (not necessarily standard) determined by $z_{\phi}$. Namely, ${}^L M_{\phi}= Z_{{}^L G}(z_{\phi})$, and $\textrm{Lie}(N_{\phi})= \{x \in \mfr{g}^{\vee}\ | \ \Ad(z_{\phi})x= tx \text{ for some }t>1\}.$ Then the triple $({}^L M_{\phi},{}^L N_{\phi}, z_{\phi})$ is conjugate to $({}^LM, {}^L N, z(\nu))$ under $G^{\vee}$.
\end{prop}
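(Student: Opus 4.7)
To prove the proposition, I would unwind the Silberger–Zink construction of Theorem~\ref{thm:Langlandsparameters}(i). Picking a representative $\phi$ of $[\phi]$, we have
$$\phi(w,x) = \phi_t(w,x)\cdot z(\nu)^{d(w)},$$
viewed inside ${}^L G$ via the Levi embedding ${}^L M\hookrightarrow {}^L G$. The key observation is that since $\phi_t$ factors through ${}^L M$ and $z(\nu)\in Z({}^L M)^0$, the elements $\phi_t(w,x)$ and $z(\nu)$ commute for every $(w,x)\in W_k\times\SL_2(\BC)$.

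For the first assertion, fix any Frobenius lift $\Fr$. Then $d(\Fr)=1$, so $\phi(\Fr) = \phi_t(\Fr)\cdot z(\nu)$. Since $\phi_t$ is tempered, $\phi_t(W_k)$ is bounded in $G^\vee$, so $\phi_t(\Fr)$ lies in a compact subgroup; writing its Jordan decomposition as $\phi_t(\Fr)=su$, the semisimple part $s$ is elliptic. Now $z(\nu)$ commutes with $\phi_t(\Fr)$ and is semisimple, hence commutes with its Jordan components $s$ and $u$. The Jordan decomposition of $\phi(\Fr)$ therefore reads $(s\cdot z(\nu))\cdot u$, with hyperbolic part precisely $z(\nu)$. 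Replacing $\Fr$ by $\Fr\cdot i$ for any $i$ in the inertia subgroup yields $\phi(\Fr i) = \phi_t(\Fr i)\cdot z(\nu)$, and $\phi_t(\Fr i)$ is still bounded; the same computation gives $z_\phi = z(\nu)$, independent of the choice of Frobenius.

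For the second assertion, I would use the strict dominance $\langle\nu,\alpha^\vee\rangle>0$ for $\alpha\in\Delta(G)\setminus\Delta(M)$. Under the identification $\mathfrak{a}_M^*\simeq Z({}^L M)^0_h$ from \cite[Corollary 4.3]{SZ18}, this means $\Ad(z(\nu))$ acts by some eigenvalue $t_\alpha>1$ on each root space appearing in $\Lie({}^L N)$, by $t_\alpha^{-1}<1$ on the opposite unipotent radical, and trivially on $\Lie({}^L M)$. Consequently $Z_{{}^L G}(z(\nu)) = {}^L M$, and $\Lie({}^L N)$ equals exactly the sum of root spaces on which $\Ad(z(\nu))$ acts with eigenvalue $>1$. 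Hence, for our chosen representative, the triple $({}^L M_\phi,{}^L N_\phi,z_\phi)$ coincides with $({}^L M,{}^L N,z(\nu))$ on the nose; for any other representative of $[\phi]$ the two triples differ by $G^\vee$-conjugation.

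The main subtlety is the Jordan-decomposition argument: one must verify that the commuting semisimples $s$ (elliptic) and $z(\nu)$ (hyperbolic), together with the unipotent $u$, genuinely assemble into the Jordan decomposition of $\phi(\Fr)$, so that its hyperbolic part is \emph{exactly} $z(\nu)$ rather than some larger or smaller hyperbolic piece. The exact equality $Z_{{}^L G}(z(\nu))={}^L M$ is a secondary technical point, but it is a direct consequence of strict dominance of $\nu$ once $\mathfrak{a}_M^*\simeq Z({}^L M)^0_h$ is invoked.
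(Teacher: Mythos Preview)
The paper does not prove this proposition at all; it is stated as a citation of \cite[Proposition 5.3(i), 5.5]{SZ18} and used as a black box. Your sketch reconstructs the argument that is in Silberger--Zink, and the overall strategy---choose the representative $\phi=\phi_t^{z(\nu)}$, separate the elliptic and hyperbolic parts of $\phi(\Fr)$, then read off the parabolic from the eigenvalues of $\Ad(z(\nu))$---is correct and is essentially the same as the source.

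One small redundancy: you write the Jordan decomposition $\phi_t(\Fr)=su$ with $u$ unipotent, but condition (ii) in the definition of an $L$-parameter already forces $\phi_t(W_k)$ to consist of semisimple elements, so $u=1$. The relevant decomposition is therefore purely the elliptic/hyperbolic splitting of the semisimple element $\phi(\Fr)=\phi_t(\Fr)\cdot z(\nu)$, and your argument goes through unchanged (and slightly more cleanly) once you drop $u$. Everything else---the commutation of $z(\nu)\in Z({}^L M)^0$ with the image of $\phi_t$, the identification $Z_{{}^L G}(z(\nu))={}^L M$ from strict dominance, and the passage to an arbitrary representative by $G^\vee$-conjugacy---is fine.
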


\subsection{Local Langlands Correspondence}\label{sec: desiderata LLC}

In this subsection, we recall the general form of the Local Langlands Correspondence. We will assume in the subsection that $\mathbf{G}$ is quasi-split. Recall that the pure inner twists of $\mathbf{G}$ are parameterized by $H^1(k,\mathbf{G})$. We write ${}^{\delta}\mathbf{G}$ for the inner twist parameterized by $\delta \in H^1(k,\mathbf{G})$ and denote the $k$-points of ${}^{\delta}\mathbf{G}$ by ${}^{\delta}\mathbf{G}(k) = {}^{\delta}G$ (note that $G={}^0G$ is the quasi-split form). Define 
$$\Pi^{\mathrm{pure}}(\mathbf{G}/k) := \bigsqcup_{\delta \in H^1(k,\mathbf{G})} \Pi({}^{\delta}G).$$
Recall the Kottwitz isomorphism $H^{1}(k, \textbf{G}) \cong \Hom( \pi_0(Z({G}^{\vee})^{\Gamma}), \BC^{\times})$ (\cite{Kot84}). For $\delta \in H^{1}(k, \textbf{G})$, let $\chi_{\delta}$ denote the corresponding character of $ \pi_0(Z({G}^{\vee})^{\Gamma})$, and let $\Irr( S_{\phi} , \delta)$ denote the set of irreducible representations $\tau$ of $S_{\phi}$ such that $\tau|_{\pi_0(Z({G}^{\vee})^{\Gamma})}$ is a direct sum of characters isomorphic to $\chi_{\delta}$. We have the following lemma.  

\begin{lemma}[{\cite[Lemma 5.7]{Kal16}}]\label{lem:relevant}
Let $\phi$ be an $L$-parameter for ${}^L G$ and let $\delta \in H^1(k, \textbf{G})$. Then $\Irr( S_{\phi} , \delta)$ is non-empty if and only if every parabolic subgroup of $^LG$ containing $\mathrm{Im}(\phi)$ is ${}^\delta \mathbf{G}$-relevant (see the end of Section \ref{sec notation}).
 \end{lemma}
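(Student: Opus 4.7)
The plan is to recast both conditions in the statement as a single condition on the character $\chi_\delta$: namely, triviality on the subgroup
\[K := \pi_0\!\left(Z(G^\vee)^\Gamma \cap Z_{G^\vee}(\phi)^\circ\right) \subseteq \pi_0(Z(G^\vee)^\Gamma).\]

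First I would handle the left-hand side. The inclusion $Z(G^\vee)^\Gamma \subseteq Z_{G^\vee}(\phi)$ induces a homomorphism $\iota: \pi_0(Z(G^\vee)^\Gamma) \to S_\phi$ whose image is central in $S_\phi$ (since $Z(G^\vee)^\Gamma$ is central in $Z_{G^\vee}(\phi)$) and whose kernel is exactly $K$. By Schur's lemma, every irreducible representation $\tau$ of $S_\phi$ acts on $\iota(\pi_0(Z(G^\vee)^\Gamma))$ via a character that factors through $\pi_0(Z(G^\vee)^\Gamma)/K$, and the definition of $\Irr(S_\phi, \delta)$ forces this character to agree with $\chi_\delta$. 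Conversely, any character of a central subgroup of a finite group is realized as the central character of some irreducible constituent of the induced representation, so $\Irr(S_\phi, \delta) \ne \emptyset$ is equivalent to $\chi_\delta|_K = 1$.

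Next I would translate ${}^\delta\mathbf{G}$-relevance via the Kottwitz isomorphism. Fix a parabolic ${}^LP$ of ${}^LG$ with Levi ${}^LM$ and let $\mathbf{M}$ be the corresponding standard $k$-Levi of the quasi-split $\mathbf{G}$. Standard Galois descent identifies ${}^\delta\mathbf{G}$-relevance of ${}^LP$ with the assertion that $\delta$ lies in the image of $H^1(k, \mathbf{M}) \to H^1(k, \mathbf{G})$; functoriality of Kottwitz then translates this to the condition that $\chi_\delta$ vanish on $\ker(\pi_0(Z(G^\vee)^\Gamma) \to \pi_0(Z(M^\vee)^\Gamma))$. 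For the ``only if'' direction: if in addition $\mathrm{Im}(\phi) \subseteq {}^LP$, then the central torus $Z(M^\vee)^\circ$ commutes with $\mathrm{Im}(\phi)$ and hence lies in $Z_{G^\vee}(\phi)^\circ$. Consequently the above kernel is contained in $K$, and $\chi_\delta|_K = 1$ suffices to force the relevance of every such ${}^LP$.

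The hardest part is the converse: if every parabolic of ${}^LG$ containing $\mathrm{Im}(\phi)$ is ${}^\delta\mathbf{G}$-relevant, we must recover $\chi_\delta|_K = 1$. For this I would exhibit a single Levi ${}^LM_\phi$ with $\mathrm{Im}(\phi) \subseteq {}^LM_\phi$ whose associated kernel equals $K$. Setting $T := Z(Z_{G^\vee}(\phi)^\circ)^\circ$ and $M_\phi^\vee := Z_{G^\vee}(T)$ yields a $\Gamma$-stable Levi with $Z(M_\phi^\vee)^\circ = T$, containing $Z_{G^\vee}(\phi)^\circ$ and hence $\mathrm{Im}(\phi)$. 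The inclusion of the corresponding kernel into $K$ was already established; the reverse inclusion reduces to the claim that every component of $Z(G^\vee)^\Gamma \cap Z_{G^\vee}(\phi)^\circ$ meets $T^{\Gamma,\circ}$ modulo $Z(G^\vee)^{\Gamma,\circ}$. This is the main technical point, and I expect it to follow from the decomposition $Z(Z_{G^\vee}(\phi)^\circ) = F \cdot T$ for a finite abelian group $F$, together with a short Galois-cohomological argument showing that the discrete part $F$ is absorbed into $Z(G^\vee)^{\Gamma,\circ}$ upon intersecting with $Z(G^\vee)^\Gamma$. Combining both directions yields $\Irr(S_\phi, \delta) \ne \emptyset \iff \chi_\delta|_K = 1 \iff$ every parabolic of ${}^LG$ containing $\mathrm{Im}(\phi)$ is ${}^\delta\mathbf{G}$-relevant.
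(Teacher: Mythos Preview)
The paper does not supply a proof of this lemma; it is quoted verbatim from \cite[Lemma~5.7]{Kal16} and used as a black box. So there is no in-paper argument to compare your proposal against.

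That said, your strategy---reducing both conditions to the triviality of $\chi_\delta$ on the subgroup $K=\pi_0\bigl(Z(G^\vee)^\Gamma\cap Z_{G^\vee}(\phi)^\circ\bigr)$---is the right one and is essentially Kaletha's approach. The first equivalence and the ``only if'' half of the second are fine. In the converse direction two points need tightening. First, $M_\phi^\vee=Z_{G^\vee}(T)$ is not literally $\Gamma$-stable: the Galois action on $G^\vee$ has no reason to fix $\mathrm{Im}(\phi)$ or $T$. What is true is that $\mathrm{Im}(\phi)$ \emph{normalizes} $T$ (because $T$ is characteristic in $Z_{G^\vee}(\phi)^\circ$), so the subgroup $M_\phi^\vee\cdot\mathrm{Im}(\phi)\subset{}^LG$ surjects onto $\Gamma$ and serves as the Levi of ${}^LG$; one must then argue that it sits inside a parabolic of ${}^LG$ containing $\mathrm{Im}(\phi)$. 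Second, the claimed equality $Z(M_\phi^\vee)^\circ=T$ is not automatic (centralizing a torus can enlarge the connected center), and in any case the decomposition $Z(Z_{G^\vee}(\phi)^\circ)=F\cdot T$ alone does not force the finite part $F$ to be absorbed into $Z(G^\vee)^{\Gamma,\circ}$ after intersecting with $Z(G^\vee)^\Gamma$; this is precisely the step Kaletha handles by a careful component-group computation, and your sketch does not yet supply it.
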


Any parameter $\phi \in \Phi(^LG)$ satisfying the equivalent conditions of Lemma \ref{lem:relevant} is called ${}^{\delta}\mathbf{G}$\emph{-relevant}.

\vspace{5mm}

By a \emph{Local Langlands Correspondence} we mean a collection of bijections
$$
    \left(\LLC_{\mathbf{M}}^{\mfr{e}}: \Pi^{\mathrm{pure}}(\mathbf{M}/k) \xrightarrow{\sim} \Phi^{\mfr{e}}({}^L M)\right)_{\mathbf{M}}
$$
indexed by $k$-Levi subgroups $\mathbf{M}$ of $\mathbf{G}$. For each $\pi \in \Pi({}^\delta M)$ such that $\LLC_{\mathbf{M}}^{\mfr{e}}(\pi) = (\phi,\tau)$, we require that $\tau \in \mathrm{Irr}(S_{\phi},\delta)$.

The main additional condition we will impose is that the bijections $\LLC_{\mathbf{M}}^{\mfr{e}}$ are compatible with the Langlands classifications of representations (see Theorem \ref{thm:Langlandsreps}) and parameters (see Theorem \ref{thm:Langlandsparameters}) in the following sense:

\begin{manualtheorem}{(LC)}\label{LC}
A local Langlands Correspondence $(\LLC_{\mathbf{M}}^{\mfr{e}})_{\textbf{M}}$ is compatible with Langlands classification if for any irreducible representation $\pi \leftrightarrow (P, \pi_t, \nu)$, we have $\LLC_{\textbf{G}}^{\mfr{e}}(\pi)\leftrightarrow (P, \LLC_{\textbf{M}}^{\mfr{e}}(\pi_t), \nu)$.
\end{manualtheorem}

We write $\LLC_{\mathbf{M}}$ for the first coordinate of $\LLC_{\mathbf{M}}^{\mfr{e}}$. Thus $\LLC_{\mathbf{M}}$ is a surjection from $\Pi^{\mathrm{pure}}(M)$ onto $\Phi(^LM)$. For $[\phi] \in \Phi(^LM)$, we write
$$\Pi_{\phi}^{\mathrm{pure}}(\mathbf{M}/k) := \mathrm{LLC}_{\mathbf{M}}^{-1}(\phi)$$
for the Vogan $L$-packet associated to $\phi$.


\subsection{Kazhdan-Lusztig Hypothesis}\label{sec: KL}

In order to prove Conjecture \ref{conj:FPP}, we also need to assume that the Local Langlands Correspondence satisfies one additional property called the \emph{Kazhdan-Lusztig hypothesis}.

Recall that an infinitesimal character for $^LG$ is a continuous homomorphism
$$\lambda: W_k \to {}^LG$$
such that
\begin{itemize}
    \item[(i)] $\lambda$ respects the maps from $W_k$ and ${}^LG$ to $\Gamma$;
    \item[(ii)] $\phi(W_k)$ consists of semisimple elements.
\end{itemize}
To each $L$-parameter $\phi$, we can associate an infinitesimal character $\lambda_{\phi}$ according to the formula
$$\lambda_{\phi}(w) = \phi \left(w, \begin{pmatrix}|w|^{\frac{1}{2}} & 0 \\0 & |w|^{-\frac{1}{2}}\end{pmatrix}\right).$$
For the remainder of this subsection, we fix an infinitesimal character $\lambda: W_k \to {}^LG$ and a Local Langlands Correspondence $(\LLC^{\mfr{e}}_{\mathbf{M}})_{\mathbf{M}}$ for $G$. Define the sets of parameters
\begin{align*}
P_{\lambda}(^LG) &= \{\phi \in P(^LG) \mid \lambda_{\phi} = \lambda\},\\
P^{\mfr{e}}_{\lambda}(^LG) &= \{(\phi,\tau) \in P^{\mfr{e}}(^LG) \mid \lambda_{\phi} = \lambda\},\\
\Phi_{\lambda}(^LG) &= P_{\lambda}(^LG)/\sim,\\
\Phi^{\mfr{e}}_{\lambda}(^LG) &= P^{\mfr{e}}_{\lambda}(^LG)/\sim,
\end{align*}
and the set of representations
$$\Pi^{\mathrm{pure}}_{\lambda}(\mathbf{G}/k) = \bigsqcup_{[\phi] \in \Phi_{\lambda}(^LG)} \Pi_{\phi}^{\mathrm{pure}}(\mathbf{G}/k).$$
The \emph{Vogan variety} associated to $\lambda$ is a finite-dimensional complex vector space
$$V_{\lambda} := \{x \in \mathfrak{g}^{\vee} \mid \Ad(\lambda(w))x = |w|x, \ \forall w \in W_k\}.$$
The reductive group $H_{\lambda} := Z_{G^{\vee}}(\lambda(W_k))$ acts on $V_{\lambda}$ with finitely many orbits, see \cite[Proposition 4.5]{Vog93}.

There is a surjective $H^{\vee}_{\lambda}$-equivariant map
\begin{align}\label{eq Vogan variety orbit}
    P_{\lambda}(^LG) \twoheadrightarrow V_{\lambda}, \qquad  \phi \mapsto x_{\phi} :=d\phi\left(0,\begin{pmatrix} 0 & 1\\0 & 0\end{pmatrix}\right).
\end{align}
This map induces a bijection on the level of $H_{\lambda}$-orbits. Moreover, for any $\phi \in P_{\lambda}(^LG)$, the natural group homomorphism
\begin{align}\label{eq maps of component groups}
    S_{\phi} = \pi_0(Z_{G^{\vee}}(\phi)) = \pi_0(Z_{H_{\lambda}}(\phi)) \to \pi_0(Z_{H_{\lambda}}(x_{\phi}))
\end{align}
is an isomorphism. Thus, we can identify the set $\Phi^{\mfr{e}}_{\lambda}(^LG)$ of enhanced $L$-parameters with the set of pairs $(\OO, \mathcal{L})$ consisting of an $H_{\lambda}$-orbit $\OO \subset V_{\lambda}$ and an irreducible $H_{\lambda}$-equivariant local system on $\OO$. To each such pair (and hence to each enhanced parameter $([\phi],\tau) \in \Phi^{\mfr{e}}_{\lambda}(^LG)$), we can associate two classes in the Grothendieck group $K^{H_{\lambda}}(V_{\lambda})$ of $H_{\lambda}$-equivariant constructible sheaves on $V_{\lambda}$, as follows. Let $j: \OO \hookrightarrow V_{\lambda}$ denote the locally-closed embedding. The first class is defined by
$$\mu([\phi],\tau) := (-1)^{\dim(\OO)} [j_!\mathcal{L}],$$
where $j_!$ is `extension by zero' and $[\bullet]$ denotes the class in $K^{H_{\lambda}}(V_{\lambda})$. The second class is defined by
$$P([\phi],\tau) := \sum_i (-1)^i H^i(j_{!*}\mathcal{L}[-\dim(\OO)]),$$
where $j_{!*}$ denotes the intermediate extension of the simple perverse sheaf $\mathcal{L}[-\dim(\OO)]$. 

It is clear that both sets $\{\mu([\phi],\tau)\}$ and $\{P([\phi],\tau)\}$ are $\ZZ$-bases for $K^{{H_{\lambda}}}(V_{\lambda})$. We write $m_g$ for the change-of-basis matrix, i.e.
\begin{equation}\label{eq:mg}\mu([\phi],\tau) = \sum_{([\phi]',\tau') \in \Phi^{\mfr{e}}_{\lambda}(^LG)} m_g(([\phi],\tau),([\phi]',\tau')) P([\phi]',\tau').\end{equation}
Via the Local Langlands Correspondence, we can also associate to $([\phi],\tau) \in \Phi^{\mfr{e}}_{\lambda}({}^LG)$ two different elements of the free $\ZZ$-module $\ZZ \Pi_{\lambda}^{\mathrm{pure}}(\mathbf{G}/k)$, namely the irreducible representation $\pi([\phi],\tau) = (\mathrm{LLC}_G^{\mfr{e}})^{-1}([\phi],\tau)$ and the standard representation $M([\phi],\tau)$ with Langlands quotient $\pi([
\phi],\tau)$. Both sets $\{\pi([\phi],\tau)\}$ and $\{M([\phi],\tau)\}$ are $\ZZ$-bases for $\ZZ \Pi_{\lambda}^{\mathrm{pure}}(\mathbf{G}/k)$. We write $m_r$ for the change-of-basis matrix, i.e.
\begin{equation}\label{eq:mr}M([\phi],\tau) = \sum_{([\phi]',\tau') \in \Phi^{\mfr{e}}_{\lambda}(^LG)} m_r(([\phi],\tau),([\phi]',\tau')) \pi([\phi]',\tau').
\end{equation}
It is natural to expect that the matrices $m_g$ and $m_r$ are mutually inverse transpose (up to signs). This expectation is called the `Kazhdan-Lusztig hypothesis'.

\begin{manualtheorem}{(KL)}\label{KL}
    A Local Langlands Correspondence $(\LLC^{\mfr{e}}_{\mathbf{M}})_{\mathbf{M}}$ satisfies the \emph{Kazhdan-Lusztig hypothesis} if for every $k$-Levi subgroup $\mathbf{M}$ of $\mathbf{G}$, any infinitesimal character $\lambda$ for ${}^L M$, and any pair of enhanced parameters $([\phi],\tau),([\phi]',\tau') \in \Phi^{\mfr{e}}_{\lambda}(^LM)$, there is an equality
$$m_r(([\phi],\tau),([\phi]',\tau')) = (-1)^{\dim \OO - \dim \OO'} m_g(([\phi]',\tau'),([\phi],\tau)),$$
where $\OO$ (resp. $\OO'$) is the $H_{\lambda}$-orbit corresponding to $[\phi]$ (resp. $[\phi']$).
\end{manualtheorem}

\section{FPP Conjecture}

In this section, we prove our main result, which we restate here for convenience.

\begin{thm}\label{thm main}
Suppose that $G$ is a pure rational form and assume that 
there exists a local Langlands correspondence $(\LLC_{\mathbf{M}}^{\mfr{e}})_{\mathbf{M}}$ for $G$ which satisfies Desiderata \ref{LC} and \ref{KL}. Suppose that $\pi$ is an irreducible unitary $G$-representation and let $\phi = \LLC_{\mathbf{G}}(\pi)$. Then the element $\nu(\lambda_{\phi}) \in \mathfrak{h}_{\RR}^*$ satisfies
\begin{align}\label{eq FPP ineq}
    \langle \nu(\lambda_{\phi}), \alpha^{\vee}\rangle \leq 1,
\end{align}
for every simple co-root $\alpha^{\vee}$.
\end{thm}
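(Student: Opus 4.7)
The plan is to argue by contradiction: assuming $\pi$ is irreducible unitary but $\langle\nu(\lambda_\phi),\alpha^\vee\rangle > 1$ for some simple co-root $\alpha^\vee$, I will embed $\pi$ into an unbounded one-parameter analytic family of irreducible Hermitian representations, and then invoke continuity of the unitary dual to derive a contradiction. This mirrors the strategy sketched in the introduction and isolates the irreducibility criterion (Theorem \ref{thm ind irred}) as the technical core.

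First I pass through the two Langlands classifications. Write $\pi \leftrightarrow (P,\pi_t,\nu)$ via Theorem \ref{thm:Langlandsreps}, with $P = MN$ a standard $k$-parabolic. Desideratum \ref{LC} together with Theorem \ref{thm:Langlandsparameters} gives $\phi \leftrightarrow (P,[\phi_t],\nu)$ at the level of parameters, where $\LLC_{\mathbf{M}}^{\mfr{e}}(\pi_t) = ([\phi_t],\tau)$. Using Proposition \ref{prop SZ18}, the element $\nu(\lambda_\phi) \in \mathfrak{h}_{\mathbb{R}}^*$ decomposes as $\nu$ (viewed in $\mathfrak{h}_{\mathbb{R}}^*$ via $\mathfrak{a}_M^* \hookrightarrow \mathfrak{h}_{\mathbb{R}}^*$) plus the $\mathfrak{sl}_2$-contribution coming from the tempered piece $\phi_t$. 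Simple co-roots lying in $M$ pair to $0$ with $\nu$ and can only contribute through the tempered piece, where the inequality is automatic; so the hypothesis $\langle\nu(\lambda_\phi),\alpha^\vee\rangle > 1$ becomes a concrete quantitative estimate on $\nu$ in the direction of a simple co-root $\alpha^\vee \notin \Delta(M)$.

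Since $\pi$ is Hermitian, there is a Weyl element $w$ with $w\cdot (\pi_t\otimes\nu) \simeq (\pi_t\otimes\nu)^h$. I choose a real direction $\chi \in \mathfrak{a}_M^*$ on which $-w$ acts trivially and for which $\langle\chi,\alpha^\vee\rangle > 0$, and form the deformation
\[
\pi_s \;:=\; \text{Langlands quotient of } \Ind_P^G\bigl(\pi_t \otimes (\nu + s\chi)\bigr), \qquad s \in \mathbb{R}_{\geq 0},
\]
with $\pi_0 = \pi$. Each $\pi_s$ is Hermitian by construction of $\chi$, and the exponent diverges linearly in $s$, so the family is unbounded. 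The crucial step is to show that $\Ind_P^G(\pi_t\otimes(\nu+s\chi))$ remains irreducible for all $s \geq 0$. Here I use Desideratum \ref{KL} to transport the question to the Vogan varieties $V_{\lambda_{\phi_s}}$: reducibility of the induced module is controlled by the change-of-basis matrix $m_r$ of \eqref{eq:mr}, which under \ref{KL} is (up to signs) the transpose inverse of the geometric matrix $m_g$ of \eqref{eq:mg}. The FPP violation $\langle\nu(\lambda_\phi),\alpha^\vee\rangle > 1$ translates, via the bijection between $\Phi_\lambda({}^LG)$ and $H_\lambda$-orbits in $V_\lambda$, into the assertion that the orbit $\OO_{\phi_s}$ attached to $\phi_s$ sits in a geometric position where the only simple perverse sheaves entering the expansion of $\mu([\phi_s],\tau)$ in the $P$-basis are supported on $\OO_{\phi_s}$ itself. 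This forces the relevant row of $m_r$ to be a single unit entry, so that $\Ind_P^G(\pi_t\otimes(\nu+s\chi))$ is irreducible for every $s \geq 0$.

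Having an unbounded continuous family of irreducible Hermitian representations through $\pi_0 = \pi$, I conclude as follows: the signature of a nondegenerate invariant Hermitian form on an irreducible admissible representation is a deformation invariant as long as irreducibility is preserved, so unitarity of $\pi_0$ would propagate to every $\pi_s$. But the matrix coefficients of $\pi_s$ grow without bound in $s$ (by Casselman's criterion applied to the Langlands data), which is incompatible with unitarity. This contradicts the assumption that $\pi$ is unitary. The main obstacle is the irreducibility step of the middle paragraph: one must show that the deformation $\nu + s\chi$ does not cross any wall where $\mathrm{Ind}_P^G$ develops a new composition factor, which amounts to a uniform control over the perverse-sheaf combinatorics on the family of Vogan varieties $\{V_{\lambda_{\phi_s}}\}_{s \geq 0}$ coming from \ref{KL}.
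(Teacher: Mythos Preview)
Your overall strategy is the paper's: assume an FPP violation, embed $\pi$ in an unbounded family of irreducible Hermitian representations, and invoke the topology of the unitary dual. The gap is in the irreducibility step, which is the heart of the argument.

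You assert that $\Ind_P^G(\pi_t\otimes(\nu+s\chi))$ is irreducible for all $s\geq 0$, claiming via \ref{KL} that the FPP violation forces the orbit $\OO_{\phi_s}\subset V_{\lambda_s}$ into a position where only perverse sheaves supported on $\OO_{\phi_s}$ enter the expansion of $\mu([\phi_s],\tau)$. This is both the wrong condition and unjustified. Even granting your claim about the row of $m_g$, the identity in \ref{KL} matches the \emph{row} of $m_g$ to the \emph{column} of $m_r$, so you would only learn that $\pi_s$ occurs in few standard modules, not that its own standard module is irreducible; for the latter you would need $\OO_{\phi_s}$ to be closed. More fundamentally, the Vogan variety $V_{\lambda_s}$ typically has several $H_{\lambda_s}$-orbits regardless of whether FPP is violated, and the standard module $\Ind_P^G(\pi_t\otimes(\nu+s\chi))$ is generically reducible: already at $s=0$ there is no reason for it to equal $\pi$.

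What the paper does instead is introduce the intermediate standard Levi ${}^LM_{\leq 1}={}^LM_{\leq 1}([\lambda])$ of \eqref{eq def of Levi}, sitting between ${}^LM$ and ${}^LG$. The key geometric input (Proposition~\ref{prop M_leq 1}) is that ${}^LM_{\leq 1}\hookrightarrow{}^LG$ induces isomorphisms $V_{\lambda_{\leq 1}}\cong V_\lambda$ and $H_{\lambda_{\leq 1}}\cong H_\lambda$; hence by \ref{KL} the multiplicity matrices for $M_{\leq 1}$ and for $G$ coincide, and $\Ind_{M_{\leq 1}}^G$ takes irreducibles to irreducibles (Theorem~\ref{thm ind irred}). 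Writing $\pi=\Ind_{M_{\leq 1}}^G\pi_{\leq 1}$, the correct deformation is $\pi_s:=\Ind_{M_{\leq 1}}^G(\pi_{\leq 1}\otimes\nu_s)$, with $\nu_s$ supported on the \emph{entire} set $S=\{\alpha^\vee\in\Delta(G^\vee):\langle\nu,\alpha^\vee\rangle>1\}$; this choice guarantees that $M_{\leq 1}([\lambda_s])$ is independent of $s$, so Theorem~\ref{thm ind irred} applies for every $s\geq 0$. The full standard module $\Ind_P^G(\pi_t\otimes(\nu+\nu_s))$ may well be reducible; what matters is that its Langlands quotient is realized as an irreducible induction from the larger Levi. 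A related gap: you do not justify the existence of $\chi$ with $w\chi=-\chi$. The paper's $\nu_s$, being supported on all of $S$, satisfies $w(\nu_s)=-\nu_s$ because $w(\nu)=-\nu$ forces $w^{-1}(S)=-S$; picking a single violating $\alpha^\vee$ would not yield this symmetry.
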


\subsection{\texorpdfstring{The Levi subgroup $M_{\leq 1}$}{}}

Suppose $[\lambda]$ is (the equivalence class of) an infinitesimal character for ${}^L G$. In this subsection, we associate to $[\lambda]$ a standard Levi subgroup $ {}^LM_{\leq 1}([\lambda])$ of ${}^L G$. This Levi subgroup will play an important role in the proof of Theorem \ref{thm main}. 

Recall from \S \ref{sec notation} that the set of standard parabolic subgroups of ${}^LG$ is in natural bijection with the set of $\Gamma$-stable subsets of $\Delta(G^{\vee})$. Every standard parabolic subgroup ${}^L P $ of $^LG$ is a semidirect product $ P^{\vee} \rtimes \Gamma$, where $P^{\vee}$ is a standard parabolic subgroup of $G^{\vee}$. There is a unique Levi factor $M^{\vee}$ of $P^{\vee}$ containing $M_0^{\vee}$ and this subgroup is preserved by the action of $\Gamma$. We write $^LM := M^{\vee} \rtimes \Gamma$. A \emph{standard Levi subgroup} is any subgroup of $^LG$ arising in this fashion. Thus, we have a bijection
\begin{align*}
    \{\text{Standard Levi subgroups of }{}^LG\} &\to \{\Gamma \text{-stable subsets of }\Delta(G^{\vee})\},\\
    {}^L M & \mapsto \Delta(M^{\vee}),
\end{align*}
where $\Delta(M^{\vee})$ denotes the set of simple roots in $M^{\vee}$.

Now we regard $[\lambda]$ as an $L$-parameter for $^LG$, trivial on $\SL_2(\BC)$. Write $[\lambda] \leftrightarrow (P_{\lambda}=M_{\lambda}N_{\lambda}, [\lambda_t], \nu_{\lambda})$. 
Let ${}^L M_{\leq 1}([\lambda])$ be the standard Levi subgroup of ${}^L G$ corresponding to 
\begin{align}\label{eq def of Levi}
    \Delta( M_{\leq 1}^{\vee}([\lambda])):= \Delta( M_{\lambda}^{\vee}) \sqcup 
    \{\alpha \in \Delta(G^{\vee})\setminus \Delta(M_{\lambda}^{\vee})\ | \ \langle \nu_{\lambda}, \alpha^{\vee}\rangle \leq 1 \}.
\end{align}
Clearly, ${}^L M_{\leq 1}([\lambda])$ depends only on $M_{\lambda}$ and $\nu_{\lambda}$, and in fact only on $\nu_{\lambda}$ since ${}^L M_{\lambda}= Z_{{}^L G}( \nu_{\lambda})$ (Proposition \ref{prop SZ18}). Hence, we may write $ {}^L M_{\leq 1}(\nu_{\lambda}):= {}^L M_{\leq 1}([\lambda])$; when $\nu_{\lambda}$ is clear from the context, we will often simply write ${}^L M_{\leq 1}$. Also, we define
\[ [\lambda_{\leq 1}] := ({}^L M_{\lambda} \hookrightarrow {}^L M_{\leq 1}) \circ [\lambda_t^{z(\nu_{\lambda})}],\]
an infinitesimal character for ${}^L M_{\leq 1}$. {Recall from \S \ref{lc parameter} that $\lambda_t^{z(\nu_{\lambda})}$ is the $z(\nu_{\lambda})$-twist of $\lambda_t$: 
$\lambda_t^{z(\nu_{\lambda})}(w,x) = \lambda_t(w,x)z(\nu_{\lambda})^{d(w)},$
where $d(w)$ denotes the order of $w$.}

 \begin{remark}\label{rmk M leq 1}
     Regard $\nu_{\lambda}\in \mfr{a}_{M_{\lambda}}^{\ast}$ as an element of $ \Lie(Z({}^LM)^{\circ}_h) \subseteq  \mfr{g}^{\vee}$. Let $P_{\leq 1}^{\vee}= M_{\leq 1}^{\vee} N_{\leq 1}^{\vee}$ be the standard parabolic subgroup of $G^{\vee}$ and let 
     $\mfr{n}_{\leq 1}^{\vee}:= \Lie( N_{\leq 1}^{\vee})$. Then \eqref{eq def of Levi} implies that the eigenvalues of $\ad(\nu_{\lambda})|_{ \mfr{n}_{\leq 1}^{\vee} } $
     are all greater than $1$.
 \end{remark}

Recall from \S \ref{sec: KL} that to each infinitesimal character $\lambda$, we can associate a reductive group $H_{\lambda}$ and a vector space $V_{\lambda}$ with $H_{\lambda}$-action.

\begin{prop}\label{prop M_leq 1}
Let $\lambda$ be an infinitesimal character for ${}^LG$, and form the Levi subgroup ${}^L M_{\leq 1}$ of ${}^L G$ as above. Then the injections $V_{\lambda_{\leq 1}} \hookrightarrow V_{\lambda}$ and $H_{\lambda_{\leq 1}} \hookrightarrow H_{\lambda}$ are isomorphisms. 
\end{prop}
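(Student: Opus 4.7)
The plan is to reduce both equalities to containments that can be handled by eigenvalue computations. Since $\lambda_{\leq 1}(w) = \lambda_t(w) z(\nu_\lambda)^{d(w)}$ coincides with $\lambda(w)$ as an element of $G^\vee$, the two injections identify
\[ V_{\lambda_{\leq 1}} = V_\lambda \cap \mathfrak{m}_{\leq 1}^\vee, \qquad H_{\lambda_{\leq 1}} = H_\lambda \cap M_{\leq 1}^\vee.\]
It therefore suffices to show $H_\lambda \subseteq M_{\leq 1}^\vee$ and $V_\lambda \subseteq \mathfrak{m}_{\leq 1}^\vee$.

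For $H_\lambda$, my plan is to invoke Proposition \ref{prop SZ18} and the uniqueness of the Jordan decomposition of a semisimple element. After replacing $\lambda$ by a $G^\vee$-conjugate, we may assume the hyperbolic part of $\lambda(\mathrm{Fr})$ is exactly $z(\nu_\lambda)$ and that $Z_{G^\vee}(z(\nu_\lambda)) = M_\lambda^\vee$. Any element of $H_\lambda$ commutes with $\lambda(\mathrm{Fr})$ and hence with its hyperbolic part $z(\nu_\lambda)$, so it lies in $M_\lambda^\vee$, which is contained in $M_{\leq 1}^\vee$ by the inclusion $\Delta(M_\lambda^\vee) \subseteq \Delta(M_{\leq 1}^\vee)$ in \eqref{eq def of Levi}.

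For $V_\lambda$, the plan is to exploit the commutation of $\lambda_t(\mathrm{Fr})$ and $z(\nu_\lambda)$ (which commute since $z(\nu_\lambda) \in Z({}^L M_\lambda)^0$) and then extract eigenvalue information. Let $x \in V_\lambda$. Simultaneously diagonalize $\mathrm{Ad}(\lambda_t(\mathrm{Fr}))$, whose eigenvalues have absolute value $1$ because $\lambda_t$ is tempered, and $\mathrm{Ad}(z(\nu_\lambda))$, whose eigenvalues are positive real because $z(\nu_\lambda)$ is hyperbolic. The relation $\mathrm{Ad}(\lambda(\mathrm{Fr}))x = qx$ forces every nonzero joint-eigencomponent of $x$ to satisfy $\alpha\beta = q$ with $|\alpha|=1$ and $\beta > 0$, so $\beta = q$. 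Hence $x$ lies in the $q$-eigenspace of $\mathrm{Ad}(z(\nu_\lambda))$ on $\mathfrak{g}^\vee$, equivalently the $1$-eigenspace of $\mathrm{ad}(\nu_\lambda)$ via the identity $\mathrm{Ad}(z(\nu_\lambda)) = q^{\mathrm{ad}(\nu_\lambda)}$. By Remark \ref{rmk M leq 1}, the eigenvalues of $\mathrm{ad}(\nu_\lambda)$ on $\mathfrak{n}_{\leq 1}^\vee$ are all strictly greater than $1$, and by taking negatives those on the opposite nilpotent radical $\mathfrak{n}_{\leq 1}^{\vee,-}$ are all strictly less than $-1$. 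Therefore the $1$-eigenspace sits entirely inside $\mathfrak{m}_{\leq 1}^\vee$, giving $x \in \mathfrak{m}_{\leq 1}^\vee$.

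I do not expect serious obstacles: the argument is essentially a short bookkeeping exercise combining Jordan decomposition with the explicit spectral bounds in Remark \ref{rmk M leq 1}. The only delicate point is consistently matching up the group element $z(\nu_\lambda) \in T_0^\vee$ with the Lie-algebra element $\nu_\lambda \in \mathfrak{g}^\vee$ via $\mathrm{Ad}(z(\nu_\lambda)) = q^{\mathrm{ad}(\nu_\lambda)}$, but once that translation is in hand the two containments drop out immediately.
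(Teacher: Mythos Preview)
Your proposal is correct and follows essentially the same strategy as the paper: both arguments hinge on decomposing $\lambda(\mathrm{Fr})$ into the commuting product of the hyperbolic element $z(\nu_\lambda)$ and the bounded part $\lambda_t(\mathrm{Fr})$, and then reading off eigenvalue information to force everything into $\mathfrak{m}_{\leq 1}^\vee$ (respectively $M_{\leq 1}^\vee$). The one presentational difference is that for $H_\lambda$ the paper argues at the Lie-algebra level (the analogue of your $V_\lambda$ computation with $s=1$ instead of $s=q$) and then handles the component groups separately via Theorem~\ref{thm:Langlandsparameters}(ii), whereas you treat the whole group at once using uniqueness of the polar decomposition in ${}^LG$; your route is slightly slicker but relies on that uniqueness holding in the disconnected group ${}^LG$, which is exactly what \cite{SZ18} supplies.

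One point worth making explicit when you write it up: the claim that the eigenvalues of $\Ad(\lambda_t(\mathrm{Fr}))$ on $\mathfrak{g}^\vee$ have modulus~$1$ is not immediate from temperedness alone, since $\lambda_t(\mathrm{Fr})\in {}^LM_\lambda$ carries a nontrivial Galois component. The paper deals with this by raising to the $r$-th power, where $r=|\Gal(K/k)|$, so that the Galois action becomes trivial and one is left with $\Ad(\underline{\lambda_t}(\mathrm{Fr}^r))$, which is genuinely elliptic. Your argument goes through unchanged once this step is inserted.
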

\begin{proof}
For a homomorphism $\Lambda: W_k \to {}^L G= G^{\vee} \rtimes \Gamma$, we let $\underline{\Lambda}: W_k \to G^{\vee}$ be the 1-cocycle defined by
\[ \Lambda(w)= (\underline{\Lambda}(w), w). \]
Suppose $[\lambda]  \leftrightarrow (P_{\lambda}, [\lambda_{t}], \nu_{\lambda})$. We can choose $\lambda_t$ so that 
\[ \lambda(w)= ( \underline{\lambda_{t}}(w) \cdot z(\nu_{\lambda})^{d(w)},w) \in M^{\vee}_{\lambda} \rtimes \Gamma. \]
Write $z= z(\nu_{\lambda})$.
For any standard Levi subgroup ${}^L M$ of ${}^L G$ containing ${}^L M_{\lambda}$ and $s \in \R$, we define 
\[  \mfr{m}^{\vee}:=\Lie({}^L M), \quad \mfr{m}_{z,s}^{\vee}:= \{x \in \mfr{m}^{\vee}\ | \ \Ad(z)x = s x \}.  \]
By definition, if ${}^L M_1 \leq {}^L M_2$, then $(\mfr{m}_1^{\vee})_{z,s} \subseteq (\mfr{m}_2^{\vee})_{z,s}$. On the other hand, by the definition of the Levi subgroup ${}^L M_{\leq 1}$, the inclusion is an equality when ${}^L M_1= M_{\leq 1}, {}^L M_2=G$ and $1 \leq s \leq q$:
\begin{align}\label{eq Lie t eigenspace}
    (\mfr{m}^{\vee}_{\leq 1})_{z,s}=\mfr{g}_{z,s}^{\vee}, \forall \, 1 \leq s \leq q.
\end{align}
We claim that 
\begin{enumerate}
    \item [(i)] $V_{\lambda}=  \{x \in \mfr{g}^{\vee}_{z, q}\ | \ \Ad(\lambda(w))x= |w|x,\ \forall w \in W_k\}, $ and,
    \item [(ii)] $\Lie(H_{\lambda})= \{ x \in \mfr{g}_{z,1}^{\vee}\ | \  \Ad(\lambda(w))x=x ,\ \forall w \in W_k\}$;
\end{enumerate}
 and similarly,
 \begin{enumerate}
    \item [(i$'$)] $V_{\lambda_{\leq 1}}=  \{x \in (\mfr{m}^{\vee}_{\leq 1})_{z, q}\ | \ \Ad(\lambda_{\leq 1}(w))x= |w|x,\ \forall w \in W_k\}, $ and 
    \item [(ii$'$)] $\Lie(H_{\lambda_{\leq 1}})= \{ x \in (\mfr{m}^{\vee}_{\leq 1})_{z,1}\ | \  \Ad(\lambda_{\leq 1}(w))x=x ,\ \forall w \in W_k\}$.
\end{enumerate}
We verify Claim (i) below; the other claims are proved analogously.

Recall that by definition
\[V_{\lambda} =  \{x \in \mfr{g}^{\vee}\ | \ \Ad(\lambda(w))x= |w|x,\ \forall w \in W_k\}.\]
Hence, $V_{\lambda_{\leq 1}} \supset  \{x \in (\mfr{m}^{\vee}_{\leq 1})_{z, q}\ | \ \Ad(\lambda_{\leq 1}(w))x= |w|x,\ \forall w \in W_k\}$. 
Now take any $x \in V_{\lambda}$, we will show that $x \in \mfr{g}^{\vee}_{z,q}$. Let $K$ be the splitting field of $G$ so that the action of $\Gamma$ on $G^{\vee}$ factors through the finite quotient $\Gamma_K:= \Gal(K/k)$, and let $r:= |\Gal(K/k)|$. Then $w^r$ acts trivially on $G^{\vee}$ and $\mfr{g}^{\vee}$ for any $ w \in W_k$, and we have that
\begin{align*}
    |w|^r x= \Ad(\lambda(w^r))x= \Ad( \underline{\lambda_{t}}(w^r) \cdot z^{r \cdot d(w)}, w^r)x = \Ad( \underline{\lambda_{t}}(w^r) \cdot z^{r \cdot d(w)})x.
\end{align*}
Since $z$ commutes with $\underline{\lambda_{t}}(w^r)$ and both are  semisimple, $x$ must be an eigenvector for both $ \Ad( \underline{\lambda_{t}}(w^r))$ and  $\Ad(z^{r\cdot d(w)})$. Since $z$ is hyperbolic and $\underline{\lambda_{t}}(w^r)$ is elliptic (see \cite[Propositions 5.3, 6.1]{SZ18}), we must have that
\[ \Ad( \underline{\lambda_{t}}(w^r)) x =x , \quad  \Ad(z^{d(w)})x =|w|x.\]
Therefore, $x$ must lie in $\mfr{g}_{z,q}^{\vee}$, which completes the verification of Claim(i). 

Now since $\lambda= ({}^L M_{\leq 1} \hookrightarrow {}^L G)\circ \lambda_{\leq 1}$, Claims (i), (ii), (i$'$), (ii$'$), and \eqref{eq Lie t eigenspace} imply that $V_{\lambda_{\leq 1}}= V_{\lambda}$ and the identity components of $H_{\lambda_{\leq 1}}$ and $H_{\lambda}$ coincide. Finally, since ${}^L M_{\leq 1}$ contains the Levi subgroup ${}^L M_{\lambda}$, by Theorem \ref{thm:Langlandsparameters}(ii), the component groups of $H_{\lambda_{\leq 1}}$ and $H_{\lambda}$, which are $\pi_0(\lambda_{\leq 1})$ and $\pi_{0}(\lambda)$, also coincide. We conclude that the map $H_{\lambda_{\leq 1}} \hookrightarrow H_{\lambda}$ is an isomorphism. This completes the proof of the proposition.
\end{proof}

\begin{lemma}\label{lem Levi LC containment}
Let $[\lambda]$ be an infinitesimal character for ${}^L G$ and let $[\phi]\in \Phi_{\lambda}({}^L G)$.
    Suppose that $[\phi] \leftrightarrow (P=MN, [\phi_t], \nu_{\phi})$. Then $
    {}^L M$ is contained in ${}^L M_{\leq 1}([\lambda])$.
\end{lemma}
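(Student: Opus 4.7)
The plan is to reduce to showing $\Delta(M^\vee)\subset \Delta(M^\vee_{\leq 1}([\lambda]))$, which (since both ${}^L M$ and ${}^L M_{\leq 1}$ are standard Levi subgroups of ${}^L G$ containing the same Galois factor) implies the desired containment. For $\alpha \in \Delta(M^\vee)$, it suffices to show $\langle \nu_\lambda,\alpha^\vee\rangle\leq 1$: then either $\alpha\in \Delta(M_\lambda^\vee)$ (when the pairing is zero) or $0 < \langle \nu_\lambda,\alpha^\vee\rangle\leq 1$, and in either case $\alpha\in \Delta(M^\vee_{\leq 1})$ by \eqref{eq def of Levi}.

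First, I would choose a representative $\phi$ of the class $[\phi]$ with $\lambda_\phi = \lambda$ literally (both in standard form relative to the standard Cartan $T^\vee$); this is possible since $[\lambda_\phi]=[\lambda]$. Writing $\phi = \phi_t^{z(\nu_\phi)}$ via its standard triple and setting $h:=\phi_t(1,\mathrm{diag}(q^{1/2},q^{-1/2}))$, a direct computation gives $\lambda(\Fr) = \phi_t(\Fr)\cdot h\cdot z(\nu_\phi)$. Since $\phi_t(\Fr)$ is elliptic while $h$ and $z(\nu_\phi)$ are commuting hyperbolic elements of ${}^L M$, Jordan decomposition identifies the hyperbolic part of $\lambda(\Fr)$ as $h\,z(\nu_\phi)$, which by Proposition \ref{prop SZ18} equals $z(\nu_\lambda)\in T^\vee$. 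This forces $h = z(\nu_\lambda)z(\nu_\phi)^{-1}\in T^\vee$. A Zariski density argument---the element $h$ has infinite order, so $\langle h\rangle$ is Zariski dense in the $1$-parameter subgroup $t\mapsto \phi_t(1,\mathrm{diag}(t,t^{-1}))$---shows this subgroup lies in $T^\vee$, hence its infinitesimal generator $\tilde H := d\phi_t(0,\mathrm{diag}(1,-1))$ belongs to $\mathfrak{h}\cap\mathfrak{m}^\vee$.

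Taking $q$-logarithms of $z(\nu_\lambda)=h\,z(\nu_\phi)$ (with the normalization $\alpha(z(\nu))=q^{\langle \nu,\alpha^\vee\rangle}$, under which $h=\exp(\tfrac{\log q}{2}\tilde H)$) yields the clean identity $\nu_\lambda = \nu_\phi + \tfrac{1}{2}\tilde H$ in $\mathfrak{h}$. For $\alpha\in\Delta(M^\vee)$, $\langle\nu_\phi,\alpha^\vee\rangle=0$ since $M^\vee = Z_{G^\vee}(z(\nu_\phi))$, so $\langle \nu_\lambda,\alpha^\vee\rangle =\tfrac{1}{2}\alpha(\tilde H)$. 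Dominance of $\nu_\lambda$ then forces $\alpha(\tilde H)\geq 0$ for all $\alpha\in\Delta(M^\vee)$, exhibiting $\tilde H$ as a \emph{dominant} neutral element of an $\mathfrak{sl}_2$-triple in the reductive Lie algebra $\mathfrak{m}^\vee$.

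To conclude, I invoke the Dynkin--Kostant classification: the weighted Dynkin diagram of any nilpotent orbit in a reductive Lie algebra takes only the values $0,1,2$ on simple roots, so $\alpha(\tilde H)\in\{0,1,2\}$ for every $\alpha\in\Delta(M^\vee)$. Therefore $\langle\nu_\lambda,\alpha^\vee\rangle = \tfrac{1}{2}\alpha(\tilde H) \leq 1$, as needed. The main subtleties will be (i) tracking conventions carefully---particularly the factor $\tfrac{1}{2}$ arising from $\mathrm{diag}(q^{1/2},q^{-1/2})$---and (ii) verifying the Zariski density step placing $\tilde H$ in the standard Cartan $\mathfrak{h}$.
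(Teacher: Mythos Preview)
Your reduction to showing $\langle \nu_\lambda,\alpha^\vee\rangle\leq 1$ for every $\alpha\in\Delta(M^\vee)$ is correct, and the Dynkin--Kostant idea is attractive, but there is a genuine gap at the step where you assert that one may choose $\phi$ with $\lambda_\phi=\lambda$ \emph{both in standard form}. Proposition~\ref{prop SZ18} only says that the hyperbolic part of $\lambda_\phi(\Fr)$ is $G^\vee$-\emph{conjugate} to $z(\nu_\lambda)$, not equal to it. Concretely: if $\phi$ is in standard form with $\tilde H\in\mathrm{Lie}(T_0^\vee)$, then the logarithm of that hyperbolic part is $\mu:=\nu_\phi+\tfrac12\tilde H$, and $\mu$ need not be $G$-dominant, because for $\alpha\in\Delta(G^\vee)\setminus\Delta(M^\vee)$ the value $\alpha(\tilde H)$ can be negative enough to overwhelm $\langle\nu_\phi,\alpha^\vee\rangle>0$. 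For instance take $G^\vee=\GL_4$, $M^\vee=\GL_2\times\GL_2$, $\phi_t=\St\boxtimes\St$ so that $\tilde H=\mathrm{diag}(1,-1,1,-1)$, and $\nu_\phi=(a,a,-a,-a)$ with $0<a<\tfrac12$: then $\mu$ is not dominant, the dominant representative is $\nu_\lambda=(a+\tfrac12,-a+\tfrac12,a-\tfrac12,-a-\tfrac12)$, and $\langle\nu_\lambda,\alpha_1^\vee\rangle=2a\neq 1=\tfrac12\alpha_1(\tilde H)$. Thus your identity $\langle\nu_\lambda,\alpha^\vee\rangle=\tfrac12\alpha(\tilde H)$ fails, and with it both the deduction that $\tilde H$ is $M$-dominant and the applicability of Dynkin--Kostant.

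The paper avoids this by reversing the order: it fixes $\lambda$ in standard form (so $\nu_\lambda$ is honestly dominant), constructs a representative $\phi\in[\phi]$ from an $\mathfrak{sl}_2$-triple $(e,f,h)$ with $e\in V_\lambda$ and $h\in\mathrm{Lie}(T_0^\vee)$, and then compares the (generally non-standard) Levi $M_\phi^\vee=Z_{G^\vee}(z_\phi)$ to $M_{\leq 1}^\vee$. The key computation is not Dynkin--Kostant on $\mathfrak{m}^\vee$ but an $\mathfrak{sl}_2$-module argument on $\mathfrak{n}^\vee_{\leq 1}$: since $[\nu_\lambda-\tfrac12 h,e]=0$, the operator $\ad(\nu_\lambda-\tfrac12 h)$ commutes with the $\mathfrak{sl}_2$-action and hence has constant eigenvalue on each irreducible summand; evaluating on lowest-weight vectors (where $\ad(\nu_\lambda)$ already has eigenvalue $>1$ by Remark~\ref{rmk M leq 1}) gives eigenvalues $>1$ throughout. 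This yields $N_{\leq 1}^\vee\subseteq N_\phi^\vee\subseteq P_{\leq 1}^\vee$ and $M_\phi^\vee\subseteq M_{\leq 1}^\vee$, and a conjugacy argument then passes from $M_\phi^\vee$ to the standard $M^\vee$.
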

\begin{proof}
Write $[\lambda] \leftrightarrow (P_{\lambda}, [\lambda_t], \nu_{\lambda})$. Take a representative $\lambda_{\ast} = ({}^LM_{\lambda} \hookrightarrow {}^L G) \circ \lambda_{\ast,t}^{z(\nu_{\lambda})}$ of $[\lambda]$.
 Recall that $z(\lambda):= z(\nu_{\lambda})\in Z({}^L M_{\lambda})^{0} \subseteq {T}_0^{\vee}$, where $\mathbf{T}_0$ is the maximal torus  of the minimal $k$-parabolic subgroup $\mathbf{P}_0$. As $\nu_{\lambda}$ is positive with respect to $P_{\lambda}$,  it is $\Delta(G^{\vee})$-dominant. Namely, for any $\alpha^{\vee} \in \Delta(G^{\vee})$, we have $ \langle \nu_{\lambda}, \alpha^{\vee}\rangle \geq 0$. Moreover, we have (Proposition \ref{prop SZ18})
\begin{align}\label{eq centralizer z}
    {}^L M_{\lambda}= Z_{{}^L G}(z(\lambda)).
\end{align}

Take any $e_{\ast} \in \OO_{\phi}$, the $H_{\lambda_{\ast}}$-orbit in $V_{\lambda_{\ast}}$ corresponding to $[\phi]$. According to \cite[Lemma 2.1]{GR10}, we can complete $e_{\ast}$ into an $\mathfrak{sl}_2$-triple $(e_{\ast},f_{\ast},h_{\ast})$, such that 
\[\Ad(\lambda_{\ast}(w))h_{\ast}=h_{\ast},\ \  \Ad(\lambda_{\ast}(w))f_{\ast}= |w|^{-1} f_{\ast}, \ \ \forall w \in W_k.\]
In particular, $\Ad(z(\lambda))h_{\ast}=h_{\ast}$, so \eqref{eq centralizer z} implies that $ h_{\ast} \in \Lie(M^{\vee}_{\lambda})$. We may take a $g \in M_{\lambda}^{\vee}$ such that $\Ad(g)h_{\ast} \in \Lie({T}_0^{\vee})$ and set $(e,f,h):= \Ad(g) (e_*,f_*,h_*)$. Let $\lambda_t:= g \lambda_{{\ast},t} g^{-1} \in [\lambda_t]$ and  $\lambda:= g \lambda_{\ast} g^{-1}\in [\lambda]$. Then $\lambda \leftrightarrow (P_{\lambda}, \lambda_t, \nu_{\lambda})$, and $e \in V_{\lambda}$. Take $\varphi: \SL_2(\BC) \to G^{\vee}$ such that 
\[ d\varphi\begin{pmatrix}
    0 & 1 \\0 & 0
\end{pmatrix}= e,\ \ d\varphi\begin{pmatrix}
    1 & 0 \\0 & -1
\end{pmatrix}= h,\ \ d\varphi\begin{pmatrix}
    0 & 0 \\1 & 0
\end{pmatrix}= f, \]
and define
\[ \phi( w,x):= \lambda(w) \cdot \varphi\begin{pmatrix}
    |w|^{-1/2} & \\ & |w|^{1/2}
\end{pmatrix} \cdot \varphi(x).\]
Then $\phi \in [\phi]$ and $\lambda_{\phi}=\lambda$.

Now we consider $\phi|_{W_k}$, which is an $L$-parameter, trivial on $\SL_2(\BC)$, and define 
\[z_{\phi}:=  z(\phi|_{W_k}) ,\ \ z_{\SL_2}:= \phi|_{\SL_2(\BC)}\left(\begin{pmatrix}
    q^{1/2} & \\ & q^{-1/2}
\end{pmatrix} \right). \]
Clearly, we have
\[ z(\lambda)= z_{\phi} \cdot z_{\SL_2}.  \]
Let $ ({}^LM_{\phi}, {}^L N_{\phi} z_{\phi})$ be the triple defined in Proposition \ref{prop SZ18}, which is conjugate to $({}^LM, {}^LN, z(\nu))$ by some $g^{\vee} \in G^{\vee}$. To show that ${}^L M$ is contained in $ {}^L M_{\leq 1}$, it suffices to show that 
\[ ^L M_{\phi} \subseteq  {}^L M_{\leq 1}, \text{ and }{}^LN_{\leq 1} \subseteq {}^L N_{\phi} \subseteq {}^L P_{\leq 1},\]
which are consequences of the following claim. 

\begin{claim}
    For any positive root $\gamma^{\vee}$ in $\mfr{n}_{\leq 1}^{\vee} = \mathrm{Lie}(N_{\leq 1}^{\vee})$, we have
\[ \langle \nu_{\lambda}- \half{h} , \gamma^{\vee} \rangle >1.\]
\end{claim}

Now we prove the claim. We regard $\nu_\lambda$ as an element in $\Lie(T_0^{\vee})_{h}.$ Then it suffices to show that any eigenvalue of $\ad(\nu_{\lambda} -\half{h})|_{\mfr{n}^{\vee}_{\leq 1}} \in \mathrm{End}(\mfr{n}_{\leq 1}^{\vee})$ is greater than 1.

Note that $(e,f,h)$ is contained in $\mfr{m}_{\leq 1}^{\vee}$. Thus, $d{\varphi}(\mathfrak{sl}_2) \subset \mfr{m}_{\leq 1}^{\vee}$. Regard $\mathfrak{n}^{\vee}_{\leq 1}$ as an $\mathfrak{sl}_2(\mathbb{C})$-representation via the homomorphism $\ad \circ d\varphi$, and consider its decomposition into isotypic components:
 \[\mfr{n}^{\vee}_{\leq 1} = \bigoplus_{d>0} V_{d}.\]
Here $V_d$ is the sum of all $d$-dimensional irreducible subrepresentations of $\mfr{n}^{\vee}_{\leq 1}$. Let 
\[ V_d^{h=t}:= \{X \in V_d \ | \ [h,X]=tX\}.\]
Then we may further decompose
\[ V_d= V_{d}^{h={1-d}} \oplus V_{d}^{h={3-d}} \oplus \cdots \oplus V_{d}^{h={d-1}},\]
and $ \ad(e)|_{V_{d}^{h=t}}:V_{d}^{h=t} \to V_{d}^{h=t+2}$
is an isomorphism for $t= 1-d,\ldots, d-3$.

Now note that $[\nu_{\lambda}- \half{h}, \half{h} ]=0$ since $\nu_{\lambda}, h \in \Lie({T}_0^{\vee})$, and
\[ [ \nu_{\lambda}-\half{h} , e]= [\nu_{\lambda}, e]- [\half{h},e]= e-e=0 .\]
Thus, $ \ad( \nu_{\lambda}- \half{h})|_{\mfr{n}^{\vee}_{\leq 1}}$ commutes with the $\mfr{sl}_2$-homomorphism $(\ad \circ d{\varphi})|_{\mfr{n}^{\vee}_{\leq 1}}$. Hence, $\ad(\nu_{\lambda}- \half{h} )|_{\mfr{n}^{\vee}_{\leq 1}}$ preserves $ V_{d}^{h=t}$ for any $t$. It remains to show that every eigenvalue of $\ad(\nu_{\lambda}- \half{h} )|_{V_{d}^{h=t}}$ is larger than $1$ for $ t= 1-d, 3-d, \cdots, d-1$. 

Let $X$ be an eigenvector of $\ad(\nu_{\lambda}- \half{h} )$ in $V_{d}^{h=t}$ for a $ t\in \{1-d, 3-d, \ldots, d-1\}$. We take $\{X_{1-d}, \ldots, X_{d-1}\}$ such that $X_t= X$ and
\[ X_{s+1}= [e,X_{s}] \]
for $s= 1-d,3-d,\ldots, d-3$. Write 
\[[\nu_{\lambda},X_{1-d}]= c' X_{1-d}. \]
Since $V_{d}^{h=1-d} \subseteq \mfr{n}_{\leq 1}^{\vee}$, we have $c'>1$ by the definition of the Levi subgroup ${}^LM_{\leq 1}$ (see Remark \ref{rmk M leq 1}). By induction on $s$, we deduce that
\[ [ \nu_{\lambda} - \half{h},X_s ]= (c'- \half{1-d}) X_s  \]
for $s= 1-d, 3-d,\ldots, d-1$. 
The case when $s=1-d$ is clear. For $s>1-d$, we have
\[ [ \nu_{\lambda} - \half{h},X_s ]= [ \nu_{\lambda} - \half{h},[ e,X_{s-1}] ]=[e, [ \nu_{\lambda} - \half{h},X_{s-1} ]]= (c'- \half{1-d}) X_s \]
by the induction hypothesis. Since $c'- \half{1-d}>1$, this completes the proof of the claim and the lemma.
\end{proof}

Here is the main theorem of this subsection.

\begin{thm}\label{thm ind irred}
    Assume that there exists a local Langlands correspondence satisfying Desiderata \ref{LC}, \ref{KL}. Let $\lambda$ be an infinitesimal character for ${}^L G$, and assume that $\Pi_{\lambda}(G)$ is non-empty. Then the following are true:
    \begin{enumerate}
        \item [(a)] The parabolic subgroup ${}^L P_{\leq 1}$ is $G$-relevant. Let $P_{\leq 1} = M_{\leq 1}N_{\leq 1}$ be the corresponding parabolic subgroup of $G$. 
        \item [(b)] The map
        \begin{align*}
          \Ind_{P_{\leq 1}}^G:  \Pi_{\lambda_{\leq 1}}(M_{\leq 1}) &\to \Pi_{\lambda}(G)
        \end{align*}
        is a well-defined bijection. That is, for every $\pi \in \Pi_{\lambda_{\leq 1}}(M_{\leq 1})$, the induction $\Ind^G_{P_{\leq 1}} \pi$ is irreducible, and every irreducible in $\Pi_{\lambda}(G)$ can be obtained in this fashion.
    \end{enumerate}
\end{thm}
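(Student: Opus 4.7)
The plan is to deduce (a) directly from Lemmas~\ref{lem:relevant} and \ref{lem Levi LC containment}, and to establish (b) by transporting the Kazhdan--Lusztig data from $G$ to $M_{\leq 1}$ via Proposition~\ref{prop M_leq 1}.

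For (a), since $\Pi_\lambda(G)$ is nonempty, choose $\pi \in \Pi_\lambda(G)$ with $\LLC_{\mathbf{G}}^{\mfr{e}}(\pi) = ([\phi],\tau)$ and $\tau \in \Irr(S_\phi, \delta)$, where $\delta \in H^1(k,\mathbf{G})$ is the class defining $G$. By Lemma~\ref{lem:relevant}, every parabolic of ${}^L G$ containing $\mathrm{Im}(\phi)$ is $G$-relevant. Writing $[\phi] \leftrightarrow (P=MN,[\phi_t],\nu_\phi)$, Lemma~\ref{lem Levi LC containment} gives ${}^L M \subseteq {}^L M_{\leq 1}$, so ${}^L P \subseteq {}^L P_{\leq 1}$ contains $\mathrm{Im}(\phi)$, and ${}^L P_{\leq 1}$ is $G$-relevant.

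For (b), Proposition~\ref{prop M_leq 1} yields $V_{\lambda_{\leq 1}} = V_\lambda$ and $H_{\lambda_{\leq 1}} = H_\lambda$, hence a canonical identification
\[
\Phi^{\mfr{e}}_{\lambda_{\leq 1}}({}^L M_{\leq 1}) \xrightarrow{\sim} \Phi^{\mfr{e}}_\lambda({}^L G), \qquad (\phi_{\leq 1},\tau) \mapsto (\phi,\tau),
\]
compatible with the pure-inner-form data via $S_{\phi_{\leq 1}} = S_\phi$. The geometric transition matrices $m_g$ of \eqref{eq:mg} on the two sides are therefore literally equal, and by Desideratum~\ref{KL} the representation-theoretic matrices $m_r$ of \eqref{eq:mr} also coincide. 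Next, one aligns Langlands data: if $\phi_{\leq 1} \leftrightarrow (P'' = M''N'', [\phi''_t], \nu'')$ in $M_{\leq 1}$, then the hyperbolic part of $\phi$ in $G$ is again $z(\nu'')$, and Lemma~\ref{lem Levi LC containment} combined with $Z_{{}^L M_{\leq 1}}(z(\nu'')) = {}^L M''$ pins down the Langlands Levi of $\phi$ in $G$ as ${}^L M''$. Dominance of $\nu''$ in $G$ on $\Delta(M_{\leq 1}) \setminus \Delta(M'')$ is inherited from dominance in $M_{\leq 1}$, while on $\Delta(G) \setminus \Delta(M_{\leq 1})$ it follows from the Claim inside the proof of Lemma~\ref{lem Levi LC containment}, which shows $\langle \nu_\lambda - h/2, \gamma^{\vee}\rangle > 1$ for positive roots $\gamma^{\vee}$ of $\mfr{n}_{\leq 1}^{\vee}$, with $\nu'' = \nu_\lambda - h/2$. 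Hence $\phi \leftrightarrow (P'', [\phi''_t], \nu'')$ in $G$.

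By Desideratum~\ref{LC}, the Langlands triples of $\pi_G(\phi,\tau)$ and $\pi_{M_{\leq 1}}(\phi_{\leq 1},\tau)$ agree as $(P'', \pi''_t, \nu'')$, and induction in stages gives
\[
\Ind_{P_{\leq 1}}^G M_{M_{\leq 1}}(\phi_{\leq 1},\tau) \;=\; \Ind_{P''}^G(\pi''_t \otimes \nu'') \;=\; M_G(\phi,\tau).
\]
Inverting $m_r$ on the $M_{\leq 1}$ side to write $\pi_{M_{\leq 1}}(\phi_{\leq 1},\tau)$ as a $\ZZ$-combination of standard modules, applying the exact functor $\Ind_{P_{\leq 1}}^G$, and comparing with the analogous inversion on the $G$ side, the equality of the two $m_r$'s forces $[\Ind_{P_{\leq 1}}^G \pi_{M_{\leq 1}}(\phi_{\leq 1},\tau)] = [\pi_G(\phi,\tau)]$ in the Grothendieck group. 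Since the right-hand side is irreducible, so is the induction, and the map $\Ind_{P_{\leq 1}}^G$ realizes, via LLC on both sides, the bijection $\Phi^{\mfr{e}}_{\lambda_{\leq 1}}({}^L M_{\leq 1}) \xrightarrow{\sim} \Phi^{\mfr{e}}_\lambda({}^L G)$, proving (b). The main obstacle is the careful alignment of Langlands data in the third paragraph together with verifying that Proposition~\ref{prop M_leq 1} intertwines the geometric KL matrices (with the sign correction in Desideratum~\ref{KL}) and respects the pure-inner-form decomposition; once these are in place, the Grothendieck group computation is formal.
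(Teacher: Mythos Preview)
Your argument follows the same architecture as the paper's: Proposition~\ref{prop M_leq 1} identifies the Vogan varieties and (via \eqref{eq maps of component groups}) the component groups, so the geometric matrices $m_g$ agree; Desideratum~\ref{KL} then matches the $m_r$'s; induction in stages identifies standard modules; and inverting $m_r$ in the Grothendieck group gives $\Ind_{P_{\leq 1}}^G \pi_{M_{\leq 1}}(\phi_{\leq 1},\tau)=\pi_G(\phi,\tau)$. Part~(a) via Lemma~\ref{lem Levi LC containment} and Lemma~\ref{lem:relevant} is fine and only cosmetically different from the paper's use of the bijectivity of $\iota$.

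The one substantive wrinkle is your alignment of Langlands data. You go from $M_{\leq 1}$ to $G$, asserting $\nu''=\nu_\lambda-h/2$ and then invoking the internal Claim of Lemma~\ref{lem Levi LC containment} to get dominance on $\Delta(G^\vee)\setminus\Delta(M_{\leq 1}^\vee)$. But $\nu''$ is the \emph{standard} exponent of $\phi_{\leq 1}$ in $M_{\leq 1}$, whereas $\nu_\lambda-h/2$ arises from the particular representative built inside that proof; they are only $W(M_{\leq 1})$-conjugate, not equal. The Claim still transfers (because $W(M_{\leq 1})$ permutes the positive roots in $\mfr{n}_{\leq 1}^\vee$), so your argument can be repaired, but as written the identification is imprecise. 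The paper handles this step more cleanly by going the other direction: it takes the Langlands triple $(P_\phi,[\phi_t],\nu_\phi)$ of $\phi$ in $G$, uses Lemma~\ref{lem Levi LC containment} as a black box to get $M_\phi\subseteq M_{\leq 1}$, forms $(P_\phi\cap M_{\leq 1},[\phi_t],\nu_\phi)$ as a standard triple for ${}^LM_{\leq 1}$, and then invokes injectivity of $\iota$ together with uniqueness in Theorem~\ref{thm:Langlandsparameters} to conclude this \emph{is} the Langlands triple of $\phi_{\leq 1}$. This avoids any appeal to the internals of Lemma~\ref{lem Levi LC containment} and the conjugacy bookkeeping.
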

\begin{proof}
    For Part (a), Proposition \ref{prop M_leq 1} implies that the map 
\[ V_{\lambda_{\leq 1}}/ H_{\lambda_{\leq 1}} \to V_{\lambda}/ H_{\lambda}\]
induced from the embedding ${}^L M_{\leq 1} \hookrightarrow {}^L G$ is a bijection. Thus, the map of $L$-parameters
\begin{align*}
  \iota: \Phi_{\lambda_{\leq 1}}(M_{\leq 1}) &\to  \Phi_{\lambda}({}^LG),\\
    [\phi_{\leq 1}] & \mapsto ({}^L M_{\leq 1} \to {}^L G) \circ [\phi_{\leq 1}].
\end{align*}
is a bijection as well. Since $\Pi_{\lambda}(G)$ is non-empty, there must exist a $[\phi] \in \Phi_{\lambda}({}^L G)$ which is $G$-relevant, see Lemma \ref{lem:relevant}. Since any $L$-parameter in $\Phi_{\lambda}({}^L G)$ must factor through ${}^L M_{\leq 1}$, we conclude that ${}^L M_{\leq 1}$ is $G$-relevant. This proves Part (a).

Now we prove Part (b). For any $[\phi] \in \Phi_{\lambda}({}^L G)$, write $[\phi_{\leq 1}]:= \iota^{-1}([\phi])$.
The embedding of $L$-groups induces a homomorphism
\[S_{[\phi_{\leq 1}]} \to S_{[\phi]}.\]
According to the isomorphism \eqref{eq maps of component groups}, Proposition \ref{prop M_leq 1} further implies that the above map is an isomorphism. Thus, we have a bijection
\begin{align*}
    \Pi_{\phi_{\leq 1}}(M_{\leq 1}) &\to \Pi_{\phi}(G),\\
    \pi(\phi_{\leq 1}, \tau) & \mapsto \pi(\phi, \tau),
\end{align*}
where $\tau $ is an irreducible representation of $S_{[\phi_{\leq 1}]} \cong S_{[\phi]} $.

We first claim that induction induces a bijection on the level of standard modules:
\begin{align}\label{eq thm ind std}
    \Ind_{M_{\leq 1}}^G (M([\phi_{\leq 1}], \tau))= M( [\phi], \tau)
\end{align}
Indeed, let $[\phi] \leftrightarrow (P_{\phi}, [\phi_t], \nu_{\phi})$. Lemma \ref{lem Levi LC containment} implies that $M_{\phi} \subseteq  M_{\leq 1}$. Thus, $(P_{\phi}\cap M_{\leq 1}, [\phi_t], \nu_{\phi})$ is a standard triple for ${}^L {M_{\leq 1}}$. The $L$-parameter 
\[ [\phi_{\leq 1}']:= ({}^L M_{\phi} \hookrightarrow {}^L M_{\leq 1}) \circ [\phi_t^{z(\nu_{\phi})}]\]
satisfies $\iota([\phi_{\leq 1}'])= [\phi]$. Thus, the injectivity of $\iota$ implies that $[\phi_{\leq 1}']=[\phi_{\leq 1}]$. We conclude that  $ [\phi_{\leq 1}]\leftrightarrow (P_{\phi}\cap M_{\leq 1}, [\phi_t], \nu_{\phi})$. Now the claim is a direct consequence of Desideratum \ref{LC} and the transitivity of induction:
\begin{align*}
    \Ind_{M_{\leq 1}}^G (M([\phi_{\leq 1}], \tau))=  \Ind_{M_{\leq 1}}^G ( \Ind_{M_{\phi}}^{M_{\leq 1}} (\pi([\phi_{t}], \tau) \otimes \nu_{\phi}))=\Ind_{M_{\phi}}^G ( \pi([\phi_{t}], \tau) \otimes \nu_{\phi})= M([\phi],\tau).
\end{align*}

To complete the proof of (b), it suffices to show that 
\[ \Ind_{M_{\leq 1}}^G \pi(\phi_{\leq 1}, \tau)= \pi(\phi, \tau).\]
By Desideratum \ref{KL} and Proposition \ref{prop M_leq 1}, we have
\begin{align*}
   m_{r} ( ([\phi], \tau), ([\phi'],\tau'))&= (-1)^{\dim(\OO_{\phi})- \dim(\OO_{\phi'})} m_g( ([\phi'],\tau'), ([\phi_{\leq 1}],\tau))\\
    &= (-1)^{\dim(\OO_{\phi_{\leq 1}})- \dim(\OO_{\phi'_{\leq 1}})} m_g( ([\phi_{\leq 1}'],\tau'), ([\phi_{\leq 1}],\tau))\\
    &=   m_{r} ( ([\phi_{\leq 1}], \tau), ([\phi_{\leq 1}'],\tau')).
\end{align*}
Let $m_{r}^{-1}$ denote the inverse of matrix of $m_r$. Then by the claim \eqref{eq thm ind std}, we obtain that
\begin{align*}
    \Ind_{M_{\leq 1}}^{G} (\pi(\phi_{\leq 1}, \tau))&= \Ind_{M_{\leq 1}}^G \left( \sum_{([\phi_{\leq 1}'],\tau') \in \Phi^{\mfr{e}}_{\lambda_{\leq 1}}( {}^L M_{\leq 1})}m_{r}^{-1} (([\phi_{\leq 1}],\tau), ([\phi_{\leq 1}'],\tau')) M([\phi_{\leq 1}'],\tau') \right)\\
    &=\sum_{([\phi_{\leq 1}'],\tau') \in \Phi^{\mfr{e}}_{\lambda_{\leq 1}}( {}^L M_{\leq 1})}m_{r}^{-1} (([\phi_{\leq 1}],\tau), ([\phi_{\leq 1}'],\tau'))  \Ind_{M_{\leq 1}}^{G}(M([\phi_{\leq 1}'],\tau'))\\
    &= \sum_{([\phi'],\tau') \in \Phi^{\mfr{e}}_{\lambda}( {}^L G)}m_{r}^{-1} (([\phi],\tau), ([\phi'],\tau'))  M([\phi'],\tau')\\
    &= \pi( [\phi], \tau).
\end{align*}
This completes the proof of Part (b).
\end{proof}

\subsection{Proof of Theorem \ref{thm main}}

We begin with two simple lemmas. Suppose that $\pi \leftrightarrow (P,\pi_t,\nu)$.
The first lemma gives necessary and sufficient conditions on $(P,\pi_t,\nu)$ for the representation $\pi$ to be Hermitian.

\begin{lemma}[{\cite[ \S 5.1]{BM89}}]\label{lem Hermitian}
    Suppose that $\pi \leftrightarrow (P, \pi_t, \nu)$. Then $\pi$ is Hermitian if and only if there exists an element $w \in W(G,A) := N_G(A)/Z_G(A)$ such that $w(M)=M$, $\pi_t^w \cong \pi_t$ and $w(\nu)=-\nu$.
\end{lemma}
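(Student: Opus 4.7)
The plan is to prove the Hermitian characterization by computing the Hermitian dual of the Langlands standard module and then appealing to the uniqueness of Langlands data in Theorem \ref{thm:Langlandsreps}. Write $I(P,\pi_t,\nu) := \Ind_P^G(\pi_t \otimes \nu)$ for the standard module, whose unique irreducible quotient is $\pi(P,\pi_t,\nu) = \pi$. The Hermitian dual $\pi^h$ is then the unique irreducible quotient of $I(P,\pi_t,\nu)^h$, so the lemma reduces to determining when $\pi^h \cong \pi$ in terms of the triple $(P,\pi_t,\nu)$.

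The main computation I would carry out is the identification
\[
I(P,\pi_t,\nu)^h \;\cong\; I(\bar P,\pi_t^h,-\nu),
\]
where $\bar P = M\bar N$ is opposite to $P$ with respect to $M$. The parabolic flips to its opposite because Hermitian duality on smooth $G$-representations intertwines normalized parabolic induction from $P$ with normalized parabolic induction from $\bar P$. The sign flip on $\nu$ arises because the Hermitian dual of a positive real character $\nu: M \to \mathbb{R}_{>0}$ is the inverse character $\nu^{-1}$, which under the isomorphism $\mathfrak{a}_M^\ast \cong \Hom(M,\mathbb{R}_{>0})$ becomes $-\nu$. Moreover, since tempered irreducible representations of a $p$-adic reductive group are unitarizable, $\pi_t^h \cong \pi_t$, so the Hermitian dual is $I(\bar P, \pi_t, -\nu)$.

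The third step is to put this triple back into standard form in order to read off the Langlands data of $\pi^h$. Since $\bar P$ has Levi $M$, conjugating by a lift $w \in N_G(A)$ of an element of $W(G,A)$ that preserves $M$ setwise and carries $\bar P$ to a standard parabolic transforms the triple into $(P, \pi_t^w, w(-\nu))$, so that $\pi^h \leftrightarrow (P, \pi_t^w, w(-\nu))$ after possibly further adjustment by $W(M,A)$. Uniqueness of the standard triple in Theorem \ref{thm:Langlandsreps} then yields $\pi \cong \pi^h$ if and only if there exists $w \in W(G,A)$ with $w(M) = M$, $\pi_t^w \cong \pi_t$, and $w(\nu) = -\nu$. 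The converse direction is immediate: given such a $w$, running the computation backwards shows $\pi^h \cong \pi$, so $\pi$ is Hermitian.

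The main thing to watch is normalization bookkeeping: one must verify that (a) Hermitian duality commutes with normalized induction exactly as stated, with the parabolic replaced by its opposite and no residual twist by a modulus character, and (b) an appropriate element $w$ always exists in $W(G,A)$ (rather than only in the absolute Weyl group) and preserves $M$ as a set, so that the data $(w\bar P w^{-1}, \pi_t^w, w(-\nu))$ is genuinely a standard triple. Both points are standard but require care; modulo this bookkeeping the argument is formal, and the statement is the $p$-adic analogue (due to Barbasch--Moy) of the Knapp--Zuckerman criterion for Hermitian representations of real groups.
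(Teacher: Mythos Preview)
The paper does not prove this lemma; it is stated with a citation to \cite[\S 5.1]{BM89} and used as a black box. Your outline is precisely the standard argument (and is essentially what one finds in Barbasch--Moy): compute the Hermitian dual of the standard module, rewrite it in standard form via a Weyl group element, and invoke the uniqueness clause of Theorem~\ref{thm:Langlandsreps}.

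One small piece of bookkeeping worth tightening: in your third step you posit a $w$ that both preserves $M$ and carries $\bar P$ to a standard parabolic. It is cleaner to take \emph{any} $w_1 \in W(G,A)$ with $w_1 \bar P w_1^{-1}$ standard; since $\bar P$ and $P$ are conjugate and each $G$-conjugacy class of $k$-parabolics contains a unique standard member, one gets $w_1 \bar P w_1^{-1} = P$ and hence $w_1(M)=M$ automatically. Then $\pi^h \leftrightarrow (P,\pi_t^{w_1}, -w_1(\nu))$, and comparing with $(P,\pi_t,\nu)$ via Theorem~\ref{thm:Langlandsreps} gives exactly the stated criterion (the element $w$ in the lemma is $w_1^{-1}$, and the freedom to adjust by $W(M,A)$ is absorbed into the isomorphism $\pi_t^w \cong \pi_t$). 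With that adjustment your argument is complete.
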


The second lemma gives a sufficient condition for non-unitarity. It is a consequence of \cite[Theorem 2.5]{Tad88} combined with the discussion in \cite[\S 3(b)]{Tad93}.

\begin{lemma}\label{lem top unitary dual}
Let $P=MN$ be a parabolic subgroup of $G$. Let $\pi$ be an irreducible representation of $M$ and let $I$ be an unbounded connected subset of $\mathfrak{a}_M^* = X^{\ast}(M)\otimes_{\Z} \R$. We identify each $s \in I$ with a positive real-valued character $\nu_s$ of $M$ via the identification (\ref{eq:astar}). Suppose that for any $s \in I$, the induced representation $\pi_s:=\Ind_{P}^G (\pi \otimes \nu_s)$ is irreducible and Hermitian. Then $ \pi_s$ is non-unitary for every $s \in I$.
\end{lemma}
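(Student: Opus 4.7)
The plan is a topological dichotomy argument applied to the locus
\[ I^u := \{ s \in I : \pi_s \text{ is unitary}\}. \]
The goal is to show that $I^u$ is simultaneously open and closed in $I$, yet a proper subset of $I$, so that connectedness of $I$ forces $I^u = \emptyset$.

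For closedness I would invoke \cite[Theorem 2.5]{Tad88}, which asserts that $\Pi_u(G)$ is closed in $\Pi(G)$ under the Fell topology. Parabolic induction is continuous in the inducing character, and each $\pi_s$ is assumed irreducible throughout $I$, so the assignment $s \mapsto \pi_s$ is a continuous map from $I$ to $\Pi(G)$. Hence $I^u$ is the preimage of a closed set, and therefore closed.

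For openness I would apply the signature-constancy principle recorded in \cite[\S 3(b)]{Tad93}. Every $\pi_s$ with $s \in I$ is irreducible and Hermitian, so it carries an invariant non-degenerate Hermitian form, unique up to a real scalar, constructed from a long normalized intertwining operator whose dependence on $\nu_s$ is real-analytic away from its poles. Irreducibility on $I$ prevents this form from degenerating anywhere on $I$, so by continuity its signature is locally constant. The positive-definiteness locus---which is exactly $I^u$---is therefore open.

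It remains to exclude $I^u = I$, and this is where unboundedness of $I$ enters. If every $\pi_s$ were unitary, then $\{\pi_s\}_{s \in I}$ would be an unbounded subset of $\Pi_u(G)$. Following \cite[\S 3(b)]{Tad93}, one fixes a $K$-isotypic subspace $V_\tau$ in the compact model of $\Ind_P^G(\pi \otimes \nu_s)$ and computes the restriction of the Hermitian form to $V_\tau$ as a Gram matrix whose entries are explicit functions of $\nu_s$; as $\nu_s$ leaves every compact region of $\mathfrak{a}_M^*$ this matrix acquires eigenvalues of opposite signs, so the form cannot remain positive-definite along the entire family. This produces a non-unitary $\pi_s$ in $I$, contradicting $I^u = I$. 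The main obstacle I foresee is precisely this last step: making the asymptotic signature calculation precise along every escape direction in the connected unbounded set $I$, rather than along a single ray. This is where one genuinely needs the Gram-matrix asymptotics of \cite[\S 3(b)]{Tad93}, as the rest of the argument is formal from closedness of $\Pi_u(G)$ and continuity of signature.
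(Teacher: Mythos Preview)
Your proposal is correct and follows essentially the same approach as the paper, which simply attributes the lemma to \cite[Theorem 2.5]{Tad88} together with \cite[\S 3(b)]{Tad93} without further elaboration; you have accurately unpacked those citations into the clopen-plus-proper argument on $I^u$. Your final worry is slightly overblown: once $I^u$ is known to be open and closed in the connected set $I$, you only need a \emph{single} $s \in I$ with $\pi_s$ non-unitary, and any $s$ with $\|\nu_s\|$ sufficiently large works by the uniform boundedness of exponents of irreducible unitary representations, so no direction-by-direction Gram-matrix asymptotics are actually required.
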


\begin{proof}[Proof of Theorem \ref{thm main}]
Let $\pi$ be a Hermitian representation of $G$. Write $\pi \leftrightarrow (P=MN, \pi_{t}, \nu)$ and choose a Weyl group element $w \in W(G,A)$ such that $w(M)=M$, $w(\pi_t) \cong \pi_t$ and $w(\nu)=-\nu$ by Lemma \ref{lem Hermitian}.

Write $[\lambda_{\LLC(\pi)}] \leftrightarrow (P_{\lambda},[\lambda_{t}], \nu)$. Suppose that the set 
\[ S:=\{ \alpha^{\vee} \in \Delta(G^{\vee}) \ | \  \langle \nu, \alpha^{\vee}\rangle >1  \} \]
is non-empty. We will show in this case that $\pi$ cannot be unitary.

For $s \geq 0$, take $\nu_s \in X^{\ast}(M) \otimes_Z \R$ such that for any simple co-root $\beta^{\vee} \in \Delta(G^{\vee})$,
\begin{align*}
    \langle \nu_s , \beta^{\vee} \rangle= \begin{cases}
        s & \text{ if }  \beta^{\vee} \in S, \\
        0 & \text{otherwise.}
    \end{cases}
\end{align*}
Define the infinitesimal character $\lambda_s:=({}^L M \hookrightarrow {}^L G)\circ \lambda_t^{z(\nu_s)}$. Let $M_{\leq 1} ([\lambda_s])$ be the (proper) Levi subgroup of $G$ defined in the previous subsection for $\lambda_s$. It follows from the definition of $\nu_s$ that the Levi subgroup $M_{\leq 1}(\lambda_s)$ is independent of $s $ for any $s \geq 0$.

 By Theorem \ref{thm ind irred}(b), there exists an irreducible representation $\pi_{\leq 1} \in \Pi_{\lambda_{\leq 1}}(M_{\leq 1}) $ such that
\[ \pi = \Ind_{M_{\leq 1}}^{G} \pi_{\leq 1}.\]
Now we regard $\nu_s$ as an unramified character of $M_{\leq 1}$, and consider the representation $\pi_{\leq 1}\otimes \nu_s$ of $M_{\leq 1}$. By Lemma \ref{lem Levi LC containment} and Desideratum \ref{LC}, the Levi subgroup $M$ is contained in $M_{\leq 1}$. Thus, we may regard $\nu_s$ as an unramified character of $M$ via restriction.

We claim that the following hold for any $s \geq 0$:
\begin{enumerate}
    \item [(i)] $ \pi_{\leq 1}\otimes \nu_s\leftrightarrow (P \cap M_{\leq 1}, \pi_t, \nu+ \nu_s)$;
    \item [(ii)] $\pi_s:=\Ind_{M_{\leq 1}}^G (\pi_{\leq 1}\otimes \nu_s) $ is irreducible, and $\pi_s \leftrightarrow (P,\pi_t, \nu+ \nu_s)$;
    \item [(iii)] $\pi_s$ is Hermitian.
\end{enumerate}

These claims imply that $\pi_s$ is not unitary for every $s \geq 0$ (and hence that $\pi=\pi_0$ is not unitary) by Lemma \ref{lem top unitary dual}.

Now we prove the claims. For Claim (i), first observe that $\nu+\nu_s$ is $\Delta(G^{\vee})$-dominant. Hence, $(P \cap M_{\leq 1}, \pi_t, \nu+ \nu_s) $ is a standard triple for ${}^L M_{\leq 1}$. Next, we have
 \[ \Ind_{M}^{M_{\leq 1}}( (\pi_t \otimes \nu) \otimes \nu_s) \cong \left(\Ind_{M}^{M_{\leq 1}}( \pi_t \otimes \nu)\right) \otimes \nu_s. \]
 as finite-length representations. Recall from the proof of Theorem \ref{thm ind irred} that $\pi_{\leq 1} \leftrightarrow (P \cap M_{\leq 1}, \pi_t, \nu)$. Namely, 
 \[  \Ind_{M}^{M_{\leq 1}} (\pi_t \otimes \nu )\twoheadrightarrow \pi_{\leq 1}.\]
Then tensoring the above surjection with $\nu_s$ gives what we want.

Now we prove Claim (ii). Claim (i) and Desideratum \ref{LC} imply that
\[  ({}^L M_{\leq 1} \hookrightarrow {}^L G) \circ [\lambda_{\LLC(\pi_{\leq 1}\otimes \nu_s)}]  =[\lambda_s]. \]
Thus, the irreducibility of the induction follows from Theorem \ref{thm ind irred}(b). Also, part (i) implies that
\[ \Ind_{M}^{M_{\leq 1}}(\pi_t \otimes (\nu+\nu_s)) \twoheadrightarrow \pi_{\leq 1} \otimes \nu_s. \]
The exactness of parabolic induction implies that
\[\Ind_{M}^{G} (\pi_t \otimes (\nu+\nu_s)) =  \Ind_{M_{\leq 1}}^{G} \circ \Ind_{M}^{M_{\leq 1}}(\pi_t \otimes (\nu+\nu_s)) \twoheadrightarrow  \Ind_{M_{\leq 1}}^G (\pi_{\leq 1} \otimes \nu_s)= \pi_s. \]
This implies that $\pi_s \leftrightarrow (P,\pi_t, \nu+ \nu_s)$ and completes the verification of Claim (ii).

Finally, we prove Claim (iii). Recall that we have chosen an element $w \in W(G,A)$ such that $w(M)=M$, $\pi_t^w \cong \pi_t$ and $w(\nu)=-\nu$. Since $\pi_s \leftrightarrow (P,\pi_t, \nu+ \nu_s)$, by Lemma \ref{lem Hermitian}, it suffices to show that $w(\nu_s)=-\nu_s$. 
Observe that for any coroot $\gamma^{\vee} \in R(G^{\vee})$, we have
\[ \langle \nu , w^{-1}(\gamma^{\vee}) \rangle= \langle w(\nu) , \gamma^{\vee} \rangle=\langle -\nu , \gamma^{\vee} \rangle = - \langle \nu , \gamma^{\vee} \rangle.\]
Thus, the Weyl group element $w^{-1}$ permutes the coroots in $R(M^{\vee})$, sends positive simple coroots in $\Delta(G^{\vee}) \setminus \Delta(M^{\vee})$ to negative simple coroots, and sends $S$ to $-S$, where we recall that 
\[ S=\{ \alpha^{\vee} \in \Delta(G^{\vee}) \ | \  \langle \nu, \alpha^{\vee}\rangle >1  \}.\]
Thus, for any simple coroot $\beta^{\vee}$ of $G^{\vee}$, we have
\[ \langle w(\nu_s), \beta^{\vee}\rangle=\langle \nu_s, w^{-1}(\beta^{\vee})\rangle= \langle - \nu_s, \beta^\vee\rangle.\]
We conclude that $w(\nu_s)=-\nu_s$. This completes the proof of the claims and the theorem.
\end{proof}

\section{Some remarks on the sharpness of the FPP bound}

For a quasi-split group $G$, the FPP bound on $\Pi_u(G)$ is sharp in the following sense: there exists a representation in $\Pi^{\textrm{pure}}(G)$ such that \emph{all} of inequalities in \eqref{eq FPP ineq} are equalities (i.e. $\nu(\lambda_\phi)$ is equal to the half-sum of the positive roots for $G$). Indeed, this is the case for the trivial and Steinberg representations.

On the other hand, if one restricts to representations belonging to a particular Bernstein component, the FPP bound may not be sharp in the sense described above. In some special cases, we may apply the idea of unramification in \cite[Chapter 5]{CFMMX22} to improve the unitarity bound. We elaborate on this idea in the following example; we will pursue it further in future work.

\begin{exmp}\label{exmp FPP not sharp}
Let $\rho$  and $\tau$ be two non-isomorphic irreducible unitary supercuspidal representations of $\GL_d(F)$ with trivial central character. Let $P$ be the standard parabolic subgroup of $\GL_{3d}(F)$ whose Levi subgroup is isomorphic to $\GL_d(F) \times \GL_d(F) \times \GL_d(F)$. For $a,b_1, b_2 \in \R$, let $\pi_{a,b_1, b_2}$ be the unique irreducible quotient of $ \Ind_{P}^{\GL_{3d}} (\rho \lvert\cdot\rvert^{a} \otimes  \tau \lvert\cdot\rvert^{b_1} \otimes  \tau \lvert\cdot\rvert^{b_2})$. Note that the induction is irreducible unless $|b_1-b_2|=1$. In order that $\pi_{a, b_1, b_2}$ is Hermitian, we must have $a=0$ and $b_1= -b_2$. Thus, we write $\pi_{b}:= \pi_{0, b, -b}$ for short.
The $\fpp$ bound suggests that $\pi_{b}$ is unitary only if $ |b| \leq 1$. However, according to \cite{Tad86}, $\pi_{b}$ is unitary if and only if $|b| \leq  \half{1}$.

Now we compute the unramification process from \cite[Chapter 5]{CFMMX22} explicitly. For $\sigma \in \{\rho,\tau\}$, write the $L$-parameter of $\sigma$ as $\phi_{\sigma}(w,x)= ( \underline{\phi_{\sigma}}(w), w)$ under the local Langlands correspondence of $\GL_d(F)$. Then, the $L$-parameter of $\pi_{b}$ is $\phi_{b}:= \lambda_{b}:= \phi_{\tau} \lvert\cdot\rvert^{b}\oplus  \phi_{\rho} \oplus \phi_{\tau} \lvert\cdot\rvert^{-b} $, which is trivial on $\SL_2(\BC)$. 

    Fixing any choice of Frobenius $\Fr$, clearly, the elliptic part and the hyperbolic part of $\lambda_{b}(\Fr)$ are $t_{\lambda}\rtimes \Fr= ( \underline{\phi_{\tau}}(\Fr) \oplus \underline{\phi_{\rho}}(\Fr)\oplus \underline{\phi_{\tau}}(\Fr) ) \rtimes \Fr $ and $s_{\lambda} \rtimes 1=  \textrm{diag}(q^{b} I_{d} , I_d, q^{-b} I_{d}) \rtimes 1 $ respectively. Thus, by Schur's lemma, we have
    \[ J_{\lambda}:= Z_{\GL_{3d}(\BC)} (\underline{\phi_{\tau}} \oplus \underline{\phi_{\rho}}\oplus \underline{\phi_{\tau}}) = \left\{ \begin{pmatrix}
        \alpha I_d &  &  \beta I_d\\& \epsilon I_d& \\ \gamma I_d&& \delta I_d
    \end{pmatrix} \ \middle| \det\begin{pmatrix}
        \alpha & \beta \\ \gamma & \delta 
    \end{pmatrix} \neq 0, \epsilon \neq 0\ \right\} \cong \GL_1(\BC) \times \GL_2(\BC). \]
    Thus, the unramification process gives the split group
    $G_{\lambda_{b}}(F) \cong \GL_1(F) \times \GL_2(F) $, and the unramified infinitesimal character $(\lambda_{b})_{\nr}$ is given by
    \[(\lambda_{b})_{\nr}(w)= \left(\left( 1, \begin{pmatrix}
        |w|^{b} & \\ & |w|^{-b}
    \end{pmatrix}\right),w\right). \]
    The $\fpp$ bound for the pair $((\lambda_{b})_{\nr}, G_{\lambda} )$ suggests a better bound $|b| \leq \half{1}$. Indeed, following the same idea of the proof of Theorem \ref{thm main}, the family of parabolic induction 
    \[\left\{\pi_b =\Ind_{P}^{\GL_{3d}} (\rho \otimes  \tau \lvert\cdot\rvert^{b} \otimes  \tau \lvert\cdot\rvert^{-b})\ \middle| \ b \in \left(\half{1}, \infty\right)\right\}\]
    is irreducible, Hermitian and unbounded. Hence, they must be non-unitary.
\end{exmp}

\end{document}